\newtheorem{Lemma}{Lemma}
\newtheorem{Remark}{Remark}
\newenvironment{proof}[1][Proof]{\textbf{#1.} }{\hfill $\square$}
\newcommand{\cad}{c\`adl\`ag }
\newcommand{\what}{\widehat}
\newcommand{\wtil}{\widetilde}
\def \eps{\varepsilon}
\def \N{\mathbb{N}}
\def \R{\mathbb{R}}
\def \bM{\mathbb{M}}
\def \E{\mathbb{E}}
\def \F{\mathcal{F}}
\def \bF{\mathbb{F}}
\def \bL{\mathbb{L}}
\def \bH{\mathbb{H}}
\def \bD{\mathbb{D}}
\def \cE{\mathcal{E}}
\def \cU{\mathcal{U}}
\def \cT{\mathcal{T}}
\def \cM{\mathcal{M}}
\def \tOm{\widetilde{\Omega}}
\def \tpi{\widetilde{\pi}}
\def \al{\alpha}
\def \P{\mathbb{P}}
\def \1{\mathbf{1}}
\begin{document}

\title{$L^p$-solution for BSDEs with jumps in the case $p<2$ 
}
\subtitle{Corrections to the paper ``BSDEs with monotone generator driven by Brownian and Poisson noises in a general filtration''}

\author{T. Kruse \thanks{University of Duisburg-Essen, Thea-Leymann-Str. 9, 45127 Essen, Germany,
e-mail: \texttt{ thomas.kruse@uni-due.de}
}
, A. Popier \thanks{Universit\'e du
Maine, Laboratoire Manceau de Math\'ematiques, Avenue Olivier Messiaen, 72085 Le Mans, Cedex 9, France,
e-mail: \texttt{ alexandre.popier@univ-lemans.fr}
}
}
\date{\today}

\maketitle

\begin{abstract}
In \cite{krus:popi:14} we established existence and uniqueness of solutions of backward stochastic differential equations in $L^p$ under a monotonicity condition on the generator and in a general filtration. There was a mistake in the case $1 < p < 2$. Here we give a corrected proof. Moreover the quasi-left continuity condition on the filtration is removed.
\end{abstract}

\section*{Introduction}

The aim of \cite{krus:popi:14} was to establish existence and uniqueness of solutions to BSDE in a general filtration that supports a Brownian motion $W$ and an independent Poisson random measure $\pi$. We considered the following multi-dimensional BSDE:
\begin{equation} \label{eq:gene_BSDE}
Y_t = \xi + \int_t^T f(s,Y_s, Z_s,\psi_s) ds - \int_t^T\int_\cU \psi_s(u) \tpi(du,ds) -\int_t^TZ_sdW_s- \int_t^T dM_s.
\end{equation}
Let us recall briefly the setting. We consider a filtered probability space $(\Omega,\F,\P,\bF = (\F_t)_{t\geq 0})$. The filtration is assumed to be complete and right continuous. We also assumed quasi-left continuity of the filtration. Nevertheless as mentioned in the introduction of \cite{bouc:poss:zhou:15} (see also Section 2.2 in \cite{popi:16}), this condition is unnecessary. 

The generator $f$ satisfies Conditions $\mathbf{ (H_{ex})}$\footnote{The precise definition of $\mathbf{ (H_{ex})}$ is given at the end of Section \ref{sect:function_space}.} in \cite{krus:popi:14}, that is, $f$ is Lipschitz continuous w.r.t.\ $z$ and $\psi$ and monotone w.r.t.\ $y$. On $\xi$ and $f^0_t = f(t,0,0,0)$, we keep the integrability condition: for some $p > 1$
\begin{equation} \label{eq:int_cond}
 \E \left(|\xi|^p +  \int_0^T |f(t,0,0,0)|^p dt \right) < +\infty.
 \end{equation}
Then the main results in \cite{krus:popi:14} can be summarized as follows. Under Assumptions $\mathbf{ (H_{ex})}$ and \eqref{eq:int_cond}, there exists a unique solution $(Y,Z,\psi,M)$ in $\cE^p(0,T)$ to the BSDE \eqref{eq:gene_BSDE} meaning that 
$$\E \left[  \sup_{t\in [0,T]} |Y_t|^p + \left( \int_0^T |Z_t|^2 dt \right)^{p/2}+  \left(  \int_0^T \int_{\cU} |\psi_s(u)|^2 \mu(du) ds \right)^{p/2}  + \left( [ M ]_T \right)^{p/2}  \right] < +\infty.$$
The comparison principle holds for this BSDE. If $ p\geq 2$, our results are true. But for $1 < p < 2$, as written in \cite{krus:popi:14} the main difference is that for $p< 2$ the compensator of a martingale does not control the predictable projection (see \cite{leng:lepi:prat:80} for a counterexample). In the proof of Proposition 3 in \cite{krus:popi:14}, Equality (31) does not hold in general. A simple counterexample is $Y_t = N_t - (T-t)$, $Z_t=0$, $\psi_t(u)=\1_{u=1}$, $\mu(du)=\delta_1(du)$.  Then 
$$Y_t = N_t - (T-t) = N_T - 2 \int_t^T ds - \int_t^T \int_\cU \psi_t(u) \tpi(du,ds).$$
Here the generator is $f(t,y,z,\psi) = -2\psi(1)$. In this case
\begin{eqnarray*}
&& \E  \int_{0}^{T}  \int_\cU |\psi_s(u)|^2  \left( |Y_{s-}|^2 \vee  |Y_{s-} +\psi_s(u)|^2 \right)^{p/2-1} \1_{|Y_{s-}|\vee |Y_{s-} + \psi_s(u)| \neq 0} \mu(du)ds \\
&& \quad = \E  \int_{0}^{T}  \left( |Y_{s-}|^2 \vee  |Y_{s-} +1|^2 \right)^{p/2-1} \1_{|Y_{s-}|\vee |Y_{s-} + 1| \neq 0} ds 
\end{eqnarray*}
where $Y_{s-}$ is the left limit of $Y$ at time $s$, and
\begin{eqnarray*}
&& \E  \int_{0}^{T}  \int_\cU |\psi_s(u)|^2 |Y_{s}|^{p-2} \1_{|Y_{s}| \neq 0} \mu(du)ds =  \E  \int_{0}^{T}  |Y_{s}|^{p-2}  \1_{|Y_{s}| \neq 0} ds.
\end{eqnarray*}
For $1<p<2$, the second integral is strictly greater than the first one. In other words, if the generator does not depend on $\psi$, our earlier proof in \cite{krus:popi:14} is safe (see also \cite{klim:15} and \cite{klim:rozk:13}). But the dependance due to the generator cannot be controlled by the first integral if $p< 2$. Thereby Proposition 3, Theorem 2, Propositions 5 and 6 and Theorem 3 in \cite{krus:popi:14} are not proved when $p < 2$. Here we present proofs for these results under strengthened assumptions.

A $\psi$-depending non trivial generator $f$ can be found in \cite{krus:popi:15} (see BSDE (3) in this paper). This example is coming from an optimal stochastic control problem. It follows from the proof of Corollary 1 in \cite{krus:popi:15} that Condition $(\mathbf{ H_{comp}})$ is satisfied (see Section \ref{ssect:comp} below) and thus $(\mathbf{ H_{ex}})$ and \textbf{(C)} are satisfied as well (see Lemma \ref{lem:hcomp->hex} and the proof of Theorem 2). Many other examples can be found for example in \cite{delo:13}, Part II (see among others BSDEs (9.30) or (11.13)).

\section{Choice of a suitable function space for the Poisson integrand when $p<2$} \label{sect:function_space}

Recall briefly the notations of \cite{krus:popi:14}. We consider a filtered probability space $(\Omega,\F,\P,\bF = (\F_t)_{t\geq 0})$, the filtration being complete and right continuous. 
Without loss of generality we suppose that all semimartingales have right continuous paths with left limits and we assume that $(\Omega,\F,\P,\bF = (\F_t)_{t\geq 0})$ supports a $k$-dimensional Brownian motion $W$ and a Poisson random measure $\pi$ with intensity $\mu(du)dt$ on the space $\cU \subset \R^m \setminus \{0\} $. The measure $\mu$ is $\sigma$-finite on $\cU$ such that
$$\int_\cU (1\wedge |u|^2) \mu(du) <+\infty.$$
The compensated Poisson random measure $\tpi(du,dt) = \pi(du,dt) - \mu(du) dt$ is a martingale w.r.t.\ the filtration $\bF$. Moreover we introduce the following notations.

\begin{itemize}
\item $G_{loc}(\mu)$ is the set of predictable functions $\psi$ on $\tOm= \Omega \times [0,T] \times \cU$ such that for any $t \geq 0$ a.s.
$$ \int_0^t \int_\cU (|\psi_s(u)|^2\wedge |\psi_s(u)|) \mu(du) < +\infty.$$
\item $ \cM_{loc}$ is the set of c\`adl\`ag local martingales orthogonal to $W$ and $\tpi$. $\cM$ is the subspace of $ \cM_{loc}$ of martingales.
\item $\bD^p(0,T)$ is the space of all adapted \cad processes $X$ such that
$\E \left(  \sup_{t\in [0,T]} |X_t|^p \right) < +\infty.$
For simplicity, we write $X_* = \sup_{t\in [0,T]} |X_t|$ and $X_*^{\beta,p}=\sup_{t\in [0,T]} e^{\beta t} |X_t|^p$. 
\item $\bH^p(0,T)$ is the subspace of all predictable processes $X$ such that
$\E \left[ \left( \int_0^T |X_t|^2 dt\right)^{p/2} \right] < +\infty.$
\item $\bM^p(0,T)$ is the subspace of $\cM$ of all martingales such that
$\E \left[ \left( [ M ]_T \right)^{p/2}\right] < +\infty.$
\item $\bL^p_\pi(0,T) = \bL^p_{\pi}(\Omega\times (0,T)\times \cU)$ is the set of processes $\psi \in G_{loc}(\mu)$ such that
$$\E \left[ \left(  \int_0^T \int_{\cU} |\psi_s(u)|^2 \pi(du,ds) \right)^{p/2} \right] < +\infty .$$
\item $\bL^p_\mu=\bL^p(\cU,\mu;\R^d)$ is the set of measurable functions $\psi : \cU \to \R^d$ such that
$\| \psi \|^p_{\bL^p_\mu} = \int_{\cU} |\psi(u)|^p \mu(du)  < +\infty .$
\item $\cE^p(0,T) = \bD^p(0,T) \times \bH^p(0,T) \times \bL^p_\pi(0,T) \times \bM^p(0,T)$.
\end{itemize}

Let $\psi \in G_{loc}(\mu)$. Let us recall known results on the (local) martingale $N$ given by 
\begin{equation} \label{eq:poiss_mart}
N_t =   \int_0^t  \int_{\cU} \psi_s(u) \tpi(du, ds), t\ge 0.
\end{equation}
It follows that the compensator is given by 
$$ [N]_t =  \int_0^t  \int_{\cU} |\psi_s(u)|^2 \pi(du, ds).$$
For $t\ge 0$ let
$$N_t^* = \sup_{r\in [0,t]} \left|  \int_0^r  \int_{\cU} \psi_s(u) \tpi(du, ds) \right|.$$

Now let us define the following norm on $\psi \in  G_{loc}(\mu)$: if $\nu$ is the measure defined on $\cU \times [0,T]$ by $\nu = \mu \otimes \mbox{Leb}$, then
\begin{eqnarray*}
\|\psi\|_{\bL^p(\bL^2_{\nu}) + \bL^p(\bL^p_{\nu})}  =  \inf_{\psi^1+\psi^2 = \psi} \left\{ \left(  \E \left[ \left( \int_0^T  \int_{\cU} |\psi^1_s(u)|^2 \mu(du)ds \right)^{p/2} \right]\right)^{1/p} \right.
 + \left. \left(\E \left[ \int_0^T  \int_{\cU} |\psi^2_s(u)|^p \mu(du)ds \right]\right)^{1/p} \right\}.
\end{eqnarray*}
And for $p\in [1,2)$ and for a measurable function $\phi$ defined on $\cU$, we put
$$\|\phi \|_{\bL^p_\mu+\bL^2_\mu}  =  \inf_{\phi^1+\phi^2 = \phi} \left( \|\phi^1\|_{\bL^p_\mu} +  \|\phi^2\|_{\bL^2_\mu} \right).$$
With this norm we can define the Banach space $\bL^p_\mu+ \bL^2_\mu$ (for the definition of the sum of two Banach spaces, see for example \cite{krei:petu:seme:82}). By the same way we define $\bL^p_\nu+ \bL^2_\nu$.  
\begin{Lemma} \label{lem:class_ineq}
$\ $
\begin{itemize}
\item \textbf{Burkholder-Davis-Gundy inequality:} For all $p\in [1,\infty)$ there exist two universal constants $c_p$ and $C_p$ (not depending on $N$) such that for any $N$ defined by \eqref{eq:poiss_mart} and for any $t \geq 0$
\begin{equation} \label{eq:BDG_ineq}
c_p \E \left( [N]_t^{p/2} \right) \leq \E \left[ (N^*_t)^{p} \right] \leq C_p \E \left( [N]_t^{p/2} \right).
\end{equation}
\item \textbf{Bichteler-Jacod inequality\footnote{See the discussion in \cite{mari:rock:14} for the name of this estimate.}:} For $p \in (1,2)$, there exist two universal constants $\kappa_p$ and $K_p$ such that for any $\psi \in G_{loc}(\mu)$, if $N$ is defined by \eqref{eq:poiss_mart}
\begin{equation} \label{eq:BJ_ineq}
\kappa_p\left[ \E \left( [N]_T^{p/2} \right) \right]^{1/p} \leq \|\psi\|_{\bL^p(\bL^2_\nu) + \bL^p(\bL^p_\nu)} \leq K_p  \left[ \E \left( [N]_T^{p/2} \right) \right]^{1/p}.
\end{equation}
\end{itemize}
\end{Lemma}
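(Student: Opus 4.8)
The plan is to establish the two estimates by separate arguments: \eqref{eq:BDG_ineq} is classical and I would only quote it, whereas \eqref{eq:BJ_ineq} I would split into its two inequalities, the upper bound being the delicate one. The process $N$ in \eqref{eq:poiss_mart} is a purely discontinuous local martingale with optional quadratic variation $[N]_t=\int_0^t\int_\cU|\psi_s(u)|^2\pi(du,ds)$; since the Burkholder--Davis--Gundy inequality holds for arbitrary (local) martingales and every $p\in[1,\infty)$, \eqref{eq:BDG_ineq} follows by simply invoking it (both sides being allowed to be infinite).

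For the left inequality of \eqref{eq:BJ_ineq} I would fix an arbitrary \emph{predictable} decomposition $\psi=\psi^1+\psi^2$, so that $N^i=\int_0^\cdot\int_\cU\psi^i_s(u)\tpi(du,ds)$ are local martingales. The triangle inequality for $\phi\mapsto(\int_0^T\int_\cU|\phi_s(u)|^2\pi(du,ds))^{1/2}$ gives $[N]_T^{1/2}\le[N^1]_T^{1/2}+[N^2]_T^{1/2}$, whence $[N]_T^{p/2}\le2^{p-1}([N^1]_T^{p/2}+[N^2]_T^{p/2})$. The optional bracket $[N^1]$ has predictable compensator $\langle N^1\rangle_T=\int_0^T\int_\cU|\psi^1_s(u)|^2\mu(du)ds$ and $\E([N^1]_\tau)=\E(\langle N^1\rangle_\tau)$ for every stopping time $\tau$, so Lenglart's domination inequality at the order $p/2\in(0,1)$ yields $\E([N^1]_T^{p/2})\le c_{p/2}\E(\langle N^1\rangle_T^{p/2})$. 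For the second term, $p/2\le1$ gives $[N^2]_T^{p/2}=(\sum_{s\le T}|\Delta N^2_s|^2)^{p/2}\le\int_0^T\int_\cU|\psi^2_s(u)|^p\pi(du,ds)$, whose expectation equals $\E\int_0^T\int_\cU|\psi^2_s(u)|^p\mu(du)ds$ by the compensation formula---this is where predictability of the decomposition is essential. Taking $p$-th roots and the infimum over decompositions yields the constant $\kappa_p$.

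The right inequality is the one I expect to be the main obstacle, since for $p<2$ the predictable compensator no longer dominates the optional bracket (the very failure recalled in the Introduction). I would build an explicit predictable decomposition by a Davis-type splitting relative to the running size of the jumps: with $J_t=\sup_{s\le t}|\Delta N_s|$, set $\psi^2_s(u)=\psi_s(u)\1_{\{|\psi_s(u)|>2J_{s-}\}}$ and $\psi^1=\psi-\psi^2$, which are predictable because $J_{s-}$ is left-continuous and adapted. The jumps kept in $\psi^2$ are ``records'': their sizes grow geometrically, so $\sum_{s\le T}|\Delta N_s|\1_{\{|\Delta N_s|>2J_{s-}\}}\le2J_T\le4N^*_T$, and since each such jump has modulus at most $J_T$, $\int_0^T\int_\cU|\psi^2_s(u)|^p\pi(du,ds)\le J_T^{p-1}\cdot2J_T\le C_p(N^*_T)^p$; taking expectations (compensation, then \eqref{eq:BDG_ineq}) bounds the $\bL^p(\bL^p_\nu)$-part by $C\,\E([N]_T^{p/2})$. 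The remaining martingale $N^1$ has jumps bounded by $2J_{s-}\le4N^*_T$, and here I must control the \emph{compensator} $\langle N^1\rangle_T=\int_0^T\int_\cU|\psi^1_s(u)|^2\mu(du)ds$, i.e.\ the $\bL^p(\bL^2_\nu)$-part $\E(\langle N^1\rangle_T^{p/2})$, by the bracket: this is exactly where a jump-correction term becomes unavoidable. By the Lenglart--L\'epingle--Pratelli inequality \cite{leng:lepi:prat:80} one has $\E(\langle N^1\rangle_T^{p/2})\le C_p(\E([N^1]_T^{p/2})+\E((\sup_{s\le T}|\Delta N^1_s|)^p))$, and both terms are dominated by $\E([N]_T^{p/2})$ (the first since $[N^1]\le[N]$, the second since $\sup_s|\Delta N^1_s|\le4N^*_T$ and \eqref{eq:BDG_ineq} applies). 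Summing the two contributions gives $K_p$. The only genuine work is the verification of the record-jump estimate and of the LLP bound with this predictable threshold; alternatively the right inequality may be quoted directly from Marinelli--R\"ockner \cite{mari:rock:14}.
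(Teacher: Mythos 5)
The paper's own ``proof'' of this lemma consists of two citations --- \eqref{eq:BDG_ineq} is quoted from Jacod (Proposition 3.66) and \eqref{eq:BJ_ineq} from Marinelli--R\"ockner --- so your closing fallback (``quote the right inequality from \cite{mari:rock:14}'') coincides exactly with what the paper does; everything beyond that in your proposal is an attempt to \emph{reprove} the Marinelli--R\"ockner estimate. Much of it is sound: the left inequality via Lenglart's domination is legitimate precisely because there the dominating process $\langle N^1\rangle$ is \emph{predictable} (and you correctly flag that the compensation step for the $\bL^p(\bL^p_\nu)$-part needs the decomposition to be predictable --- implicitly the infimum in the sum norm should be read over predictable decompositions, as in \cite{mari:rock:14}); the Davis-type record splitting $\psi^2_s(u)=\psi_s(u)\1_{\{|\psi_s(u)|>2J_{s-}\}}$, its predictability, and the geometric-series bound $\sum_s|\Delta N_s|\1_{\{|\Delta N_s|>2J_{s-}\}}\le 2J_T\le 4N^*_T$ are all correct.

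There is, however, one genuinely false step: the ``Lenglart--L\'epingle--Pratelli inequality'' you invoke, $\E(\langle M\rangle_T^{p/2})\le C_p\bigl(\E([M]_T^{p/2})+\E((\sup_{s\le T}|\Delta M_s|)^p)\bigr)$, does \emph{not} hold for $p<2$, even within the class \eqref{eq:poiss_mart}. Take $\mu=\lambda\delta_{u_0}$, let $\sigma$ be the first atom of $\pi$ and $\psi_s(u)=a\1_{\{s\le\sigma\}}$, so $N_t=a(\1_{\{\sigma\le t\}}-\lambda(t\wedge\sigma))$. Then $[N]_T=a^2\1_{\{\sigma\le T\}}$, $\langle N\rangle_T=a^2\lambda(T\wedge\sigma)$ and $\sup_{s\le T}|\Delta N_s|=a\1_{\{\sigma\le T\}}$; as $\lambda T\to 0$ the left-hand side is of order $a^p(\lambda T)^{p/2}$ while the right-hand side is of order $a^p\lambda T$, and $(\lambda T)^{p/2-1}\to\infty$ since $p/2<1$. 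This is the very phenomenon recalled in the paper's introduction: for $p<2$ an \emph{optional} correction such as $\sup_s|\Delta M_s|$ cannot restore the domination, because it is invisible before the jump occurs --- the correction must be predictable. Your construction fortunately supplies exactly that: $\Delta[N^1]_s=|\Delta N^1_s|^2\le 4J_{s-}^2=:D_s$ with $D$ predictable and increasing, so $C_t=[N^1]_{t-}+D_t$ is a predictable increasing process dominating $\langle N^1\rangle$ in Lenglart's sense ($\E\langle N^1\rangle_\tau=\E[N^1]_\tau\le\E([N^1]_{\tau-}+D_\tau)$ for every stopping time $\tau$, after localization), and Lenglart's inequality at the exponent $p/2<1$, now with a predictable dominator, gives
\begin{equation*}
\E\bigl(\langle N^1\rangle_T^{p/2}\bigr)\le \tfrac{2-p/2}{1-p/2}\,\E\bigl(C_T^{p/2}\bigr)
\le C\Bigl(\E\bigl([N]_T^{p/2}\bigr)+2^p\,\E\bigl(J_T^{p}\bigr)\Bigr)\le C'\,\E\bigl([N]_T^{p/2}\bigr),
\end{equation*}
using $[N^1]\le[N]$ (true here because $\psi^1\psi^2=0$ pointwise), $J_T\le 2N^*_T$ and \eqref{eq:BDG_ineq}. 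With this one substitution --- the predictable-jump-control version of the LLP estimate in place of the false optional-jump version --- your proof of the right inequality closes; as written, it rests on a lemma refuted by the same single-jump example that motivated this corrigendum.
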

\begin{proof}
The first inequality \eqref{eq:BDG_ineq} is proved in \cite{jaco:79}, Proposition 3.66. The second result \eqref{eq:BJ_ineq} can be found for example in \cite{mari:rock:14}, Theorem 1 and the following comments pages 297 and 298. 
\end{proof}

From the Bichteler-Jacod inequality \eqref{eq:BJ_ineq} we deduce the next result.
\begin{Lemma} \label{lem:sum_norm_estim}
For $p \in (1,2)$, there exists a universal constant $K_{p,T}$ such that for any $\psi \in G_{loc}(\mu)$ and $N$ defined by \eqref{eq:poiss_mart}
\begin{equation} \label{eq:norm_equiv}
\E \left[  \int_0^T  \|\psi_s\|^p_{\bL^p_\mu+\bL^2_\mu} ds\right] \leq K_{p,T}   \E \left( [N]_T^{p/2} \right) .
\end{equation}
If a function $\phi$ defined on $[0,T]\times \cU$ is in $\bL^1_\nu+ \bL^2_\nu$, then
\begin{equation} \label{eq:int_norm_sum_banach}
\int_0^T \|\phi_s\|_{\bL^1_\mu + \bL^2_\mu} ds \leq (1\vee \sqrt{T}) \|\phi\|_{\bL^1_\nu + \bL^2_\nu}.
\end{equation}
\end{Lemma}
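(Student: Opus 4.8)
The plan is to prove both inequalities by the same device: take a decomposition of the full object (the process $\psi$, resp.\ the function $\phi$) that nearly realizes the infimum defining the ambient sum-norm, and feed it \emph{pointwise in time} into the infimum defining the integrand's sum-norm. The only genuine work is to recombine the two resulting time-integrals, where a single application of Hölder's inequality in the time variable converts a time-integral of a spatial norm raised to a power into the corresponding space-time norm, and produces the $T$-dependent constant.

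For \eqref{eq:norm_equiv} I would start from the upper half of the Bichteler-Jacod inequality \eqref{eq:BJ_ineq}, which gives $\|\psi\|^p_{\bL^p(\bL^2_\nu)+\bL^p(\bL^p_\nu)}\le K_p^p\,\E([N]_T^{p/2})$, so it suffices to bound the left-hand side of \eqref{eq:norm_equiv} by a constant times $\|\psi\|^p_{\bL^p(\bL^2_\nu)+\bL^p(\bL^p_\nu)}$. Fix any decomposition $\psi=\psi^1+\psi^2$. The key observation is that the spatial exponents must be \emph{swapped}: $\psi^1$ carries an $\bL^2_\mu$ spatial structure and $\psi^2$ an $\bL^p_\mu$ one, so for each $s$ I use $\psi^2_s,\psi^1_s$ as the admissible pair in the definition of $\|\psi_s\|_{\bL^p_\mu+\bL^2_\mu}$, giving
$$\|\psi_s\|_{\bL^p_\mu+\bL^2_\mu}\le \Big(\int_\cU |\psi^2_s(u)|^p\mu(du)\Big)^{1/p}+\Big(\int_\cU |\psi^1_s(u)|^2\mu(du)\Big)^{1/2}.$$
Raising to the power $p$ via $(a+b)^p\le 2^{p-1}(a^p+b^p)$, then integrating in $s$ and taking expectation, the $\psi^2$-term is already the $p$-th power of the $\bL^p(\bL^p_\nu)$-norm, while the $\psi^1$-term is $\E\int_0^T g_s^{p/2}\,ds$ with $g_s=\int_\cU|\psi^1_s(u)|^2\mu(du)$.

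The one nontrivial estimate is to pass from $\E\int_0^T g_s^{p/2}\,ds$ to $\E\big(\int_0^T g_s\,ds\big)^{p/2}$, i.e.\ from a space-time $\bL^p(\bL^2)$ structure to the genuine $\bL^2_\nu$ norm. Since $p/2<1$ this is exactly Hölder's inequality in time with conjugate exponents $2/p$ and $2/(2-p)$, namely
$$\int_0^T g_s^{p/2}\,ds\le T^{1-p/2}\Big(\int_0^T g_s\,ds\Big)^{p/2}.$$
Combining this with $a^p+b^p\le(a+b)^p$ (valid for $p\ge 1$) yields $\E\int_0^T\|\psi_s\|^p_{\bL^p_\mu+\bL^2_\mu}\,ds\le 2^{p-1}(1\vee T^{1-p/2})\big(\|\psi^1\|_{\bL^p(\bL^2_\nu)}+\|\psi^2\|_{\bL^p(\bL^p_\nu)}\big)^p$; taking the infimum over decompositions and then invoking \eqref{eq:BJ_ineq} gives \eqref{eq:norm_equiv} with $K_{p,T}=2^{p-1}(1\vee T^{1-p/2})K_p^p$.

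For \eqref{eq:int_norm_sum_banach} the same scheme applies, now with exponents $1$ and $2$ and no probability space. Given $\phi=\phi^1+\phi^2$ nearly optimal for $\|\phi\|_{\bL^1_\nu+\bL^2_\nu}$, the pointwise decomposition bounds $\|\phi_s\|_{\bL^1_\mu+\bL^2_\mu}$ by $\int_\cU|\phi^1_s(u)|\mu(du)+\big(\int_\cU|\phi^2_s(u)|^2\mu(du)\big)^{1/2}$; integrating in $s$, the first term is $\|\phi^1\|_{\bL^1_\nu}$ exactly, and the second is handled by Cauchy--Schwarz in time, $\int_0^T h_s^{1/2}\,ds\le\sqrt T\,\|\phi^2\|_{\bL^2_\nu}$ with $h_s=\int_\cU|\phi^2_s(u)|^2\mu(du)$. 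The factor $1\vee\sqrt T$ together with the infimum over decompositions finishes the proof. The main obstacle is simply recognizing the exponent swap in the first part and selecting the correct Hölder pairing in time; the remainder is bookkeeping.
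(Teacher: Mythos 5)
Your proposal is correct and follows essentially the same route as the paper: an arbitrary (near-optimal) decomposition fed pointwise in time into the sum-norm, Jensen/H\"older in the time variable producing the factors $T^{1-p/2}$ and $\sqrt{T}$, and the Bichteler--Jacod inequality \eqref{eq:BJ_ineq} to conclude \eqref{eq:norm_equiv}. Your ``exponent swap'' is merely the symmetry of the sum-norm $\bL^p_\mu+\bL^2_\mu=\bL^2_\mu+\bL^p_\mu$, which the paper uses tacitly, and your explicit constant $K_{p,T}=2^{p-1}(1\vee T^{1-p/2})K_p^p$ is consistent with the paper's unspecified one.
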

\begin{proof}
Let $\psi^1 \in \bL^p(\bL^2_\nu)$ and $\psi^2 \in \bL^p(\bL^p_\nu)$ such that $\psi =\psi^1+\psi^2$. By Jensen's inequality:
\begin{eqnarray*}
\E \left[  \int_0^T \|\psi^1_s\|^p_{\bL^2_\mu} ds\right] & = & \E \left[  \int_0^T  \left( \int_{\cU} |\psi^1_s(u)|^2 \mu(du)\right)^{p/2} ds  \right] \\
 & \leq & T^{1-\frac{p}{2}}  \E \left[ \left( \int_0^T  \int_{\cU} |\psi^1_s(u)|^2 \mu(du)ds \right)^{p/2} \right] =T^{1-\frac{p}{2}}  \|\psi^1\|^p_{\bL^p(\bL^2_\nu)}. 
\end{eqnarray*}
and
\begin{eqnarray*}
 \E \left[  \int_0^T \|\psi^2_s\|^p_{\bL^p_\mu} ds\right] & = & \E \left[  \int_0^T  \left( \int_{\cU} |\psi^2_s(u)|^p \mu(du)\right) ds  \right] = \|\psi^2\|^p_{\bL^p(\bL^p_\nu)}
\end{eqnarray*}
We deduce \eqref{eq:norm_equiv} directly from Bichteler-Jacod inequality \eqref{eq:BJ_ineq}.

For the second inequality, if $\phi \in \bL^1_\nu+ \bL^2_\nu$, for any $\eps > 0$, there are two functions $\phi^1$ and $\phi^2$ in $\bL^1_\nu$, respectively  in $\bL^2_\nu$ with $\phi = \phi^1+\phi^2$ and 
$$\|\phi\|_{\bL^1_\nu + \bL^2_\nu}\leq \|\phi^1\|_{\bL^1_\nu} +\|\phi^2\|_{\bL^2_\nu} \leq \|\phi\|_{\bL^1_\nu + \bL^2_\nu} + \eps.$$ 
Hence for almost any $s \in [0,T]$, $\phi^1_s$ (resp. $\phi^2_s$)  belongs to $\bL^1_\mu$ (resp. $\bL^2_\mu$) with $\phi_s = \phi^1_s+\phi^2_s$. Thus by definition $\|\phi_s\|_{\bL^1_\mu + \bL^2_\mu} \leq \|\phi^1_s\|_{\bL^1_\mu} + \|\phi^2_s\|_{\bL^2_\mu}$. We integrate this inequality between 0 and $T$ and by Jensen's inequality
\begin{eqnarray*}
\int_0^T \|\phi_s\|_{\bL^1_\mu + \bL^2_\mu} ds & \leq & \int_0^T\|\phi^1_s\|_{\bL^1_\mu} ds + \int_0^T \|\phi^2_s\|_{\bL^2_\mu} ds \\
& \leq  & \|\phi^1\|_{\bL^1_\nu} +\sqrt{T} \|\phi^2\|_{\bL^2_\nu} \leq  (1\vee \sqrt{T})(\|\phi^1\|_{\bL^1_\nu} + \|\phi^2\|_{\bL^2_\nu}) \\
& \leq & (1\vee \sqrt{T})(\|\phi\|_{\bL^1_\nu + \bL^2_\nu} + \eps).
\end{eqnarray*}
Since these inequalities are true for any $\eps > 0$, we deduce Estimate \eqref{eq:int_norm_sum_banach}.
\end{proof}

In particular \eqref{eq:BDG_ineq} means that the martingale $N$ is well-defined (see Chapter II in \cite{jaco:79}) provided we can control $[N]$ in $\bL^{p/2}(\Omega)$. And from \eqref{eq:norm_equiv}, $\P \otimes \mbox{Leb}$-a.s. on $\Omega \times [0,T]$, $\psi_t$ is in $\bL^p_\mu+\bL^2_\mu$ if again we control $[N]$ in $\bL^{p/2}(\Omega)$. From Lemma \ref{lem:sum_Lp_spaces} below, $\psi_t$ is also in $\bL^1_\mu+\bL^2_\mu$ and this implies that for any $b \in (0,+\infty)$ 
$$\E \left[ \int_0^T  \int_{\cU} \left( |\psi_s(u)|^2 \1_{|\psi_s(u)|\leq b} + |\psi_s(u)| \1_{|\psi_s(u)|> b}\right) \mu(du) ds\right] < +\infty.$$
This last estimate can be also found in Proposition 3.68 of \cite{jaco:79}\footnote{With our setting, the process $\widehat W$ of \cite{jaco:79} is identically equal to zero.}  in a more general setting.

To illustrate and motivate our purpose, let us consider a stable L\'evy process $X=(X_t, \ 0\leq t \leq T)$. The L\'evy measure is $\mu(du) = \frac{1}{|u|^{1+\alpha}} du$ where $u \in \cU= \R\setminus \{0\}$ and $0 < \alpha < 2$. Then by the L\'evy-Khintchine decomposition:
\begin{eqnarray*}
X_t & = & \int_0^t \int_\cU u \1_{|u|<1} \tpi(du,ds) +  \int_0^t \int_\cU u \1_{|u|\geq1} \pi(du,ds) \\
& = &  \int_0^t \int_\cU u \tpi(du,ds) + t \int_\cU u \1_{|u|\geq1} \mu(du) =  \int_0^t \int_\cU u \tpi(du,ds) .
\end{eqnarray*}
Now $X_T \in \bL^p(\Omega)$ if and only if $p<\alpha <2$. We take $\xi = X_T$, $Y_t = X_t$, $\psi_t(u) = u$ and
$$Y_t = \xi - \int_t^T \int_\cU u \tpi(du,ds).$$ 
For any $t \in [0,T]$, $p<\alpha$, $Y_t$ is in $\bL^p(\Omega)$ and $\psi_t \not\in \bL^2_\mu$. Nevertheless for any $\delta > 0$, $\phi^1_t = \psi_t \1_{|\psi_t|\leq \delta}$ belongs to $\bL^2_\mu$ and $\phi^2_t=\psi_t\1_{|\psi_t|\geq \delta}$ to $\bL^p_\mu$. Thus $\psi_t$ is in $\bL^p_\mu+\bL^2_\mu$. And it is easy to check that $\psi_t$ also belongs to $\bL^1_\mu+\bL^2_\mu$.

\vspace{0.5cm}
\noindent \textbf{Conclusion and assumption on $f$:} From now on, we assume that $p \in (1,2)$. Then we have to choose $\psi$ in a suitable integrability space, namely $\bL^1_\mu+ \bL^2_\mu$. From the next Lemma \ref{lem:sum_Lp_spaces}, this space contains all spaces $\bL^p_\mu + \bL^2_\mu$. Hence in the rest of the paper, our generator $f$ satisfies \textbf{Condition} $\mathbf{ (H_{ex})}$: 
\begin{enumerate}
\item[(H1)] For every $t \in [0,T]$, $z\in \R^{d\times k}$ and every $\psi \in \bL^1_\mu+\bL^2_\mu$ the mapping $y \in \R^d \mapsto f(t,y,z,\psi)$ is continuous. Moreover there exists a constant
$\alpha$ such that
$$\langle f(t,y,z,\psi)-f(t,y',z,\psi),y-y'\rangle \leq \alpha |y-y'|^2.$$
\item[(H2)] For every $r  > 0$ the mapping $(\omega,t) \mapsto \sup_{|y|\leq r} |f(t,y,0,0)-f(t,0,0,0)|$ belongs to $L^1(\Omega \times [0,T], \P \otimes m)$.
\item[(H3)] There exists a constant $K$ such that for any $t$ and $y$, for any $z$, $z'$ in $\R^{d\times k}$ and $\psi$, $\psi'$ in $\bL^1_\mu+ \bL^2_\mu$
\begin{equation*} 
|f(t,y,z,\psi) - f(t,y,z',\psi')|\leq K \left( |z-z'| + \|\psi-\psi'\|_{\bL^1_\mu+ \bL^2_\mu} \right).
\end{equation*}
\end{enumerate}
Note that (H1) and (H2) coincide with assumptions (H1) and (H2) in \cite{krus:popi:14}, whereas (H3) above replaces the older condition (H3) in  \cite{krus:popi:14}.

\begin{Remark}
Note that if $p\geq 2$, since $\bL^p_\mu+ \bL^2_\mu \subset \bL^2_\mu$, the generator $f$ can be defined on the function set $\bL^2_\mu$. 
\end{Remark}

The next result will be used several times. Although it is quite simple we did not find a reference. The proof is postponed to the end of this paper.
\begin{Lemma} \label{lem:sum_Lp_spaces}
Let $p\in [1,\infty)$ and  $\phi : \cU \to \R^d$ be a function in $\bL^p_\mu + \bL^2_\mu$. Then
$\|\phi \|_{\bL^p_\mu+\bL^2_\mu} < +\infty$ if and only if for any $\delta > 0$ it holds that $\phi \1_{|\phi| \leq \delta} \in \bL^2_\mu$ and $\phi \1_{|\phi| > \delta} \in \bL^p_\mu$. Moreover it holds that $\bL^p_\mu+\bL^2_\mu \subset \bL^1_\mu+\bL^2_\mu$. The same results hold if $\mu$ is replaced by $\nu$.
\end{Lemma}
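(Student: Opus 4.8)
The plan is to prove the truncation characterisation first and then read off both the inclusion $\bL^p_\mu+\bL^2_\mu\subset\bL^1_\mu+\bL^2_\mu$ and the $\nu$-version directly from it, since nothing beyond $\sigma$-finiteness of the underlying measure will be used. Everything rests on two elementary pointwise inequalities, which hold because $p\le 2$ (the regime relevant here): for any $\delta>0$,
\begin{equation*}
|x|^2\le \delta^{2-p}\,|x|^p \quad (|x|\le\delta), \qquad |x|^p\le \delta^{p-2}\,|x|^2 \quad (|x|>\delta).
\end{equation*}
The easy implication is then immediate: if $\phi\1_{|\phi|\le\delta}\in\bL^2_\mu$ and $\phi\1_{|\phi|>\delta}\in\bL^p_\mu$ for one $\delta$ (say $\delta=1$), then the decomposition $\phi=\phi\1_{|\phi|>1}+\phi\1_{|\phi|\le1}$ exhibits $\phi$ as the sum of an $\bL^p_\mu$-function and an $\bL^2_\mu$-function, so $\|\phi\|_{\bL^p_\mu+\bL^2_\mu}<\infty$ by definition of the sum norm.

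For the converse, which is the only genuinely technical step, I would start from an arbitrary decomposition $\phi=\phi^1+\phi^2$ with $\phi^1\in\bL^p_\mu$ and $\phi^2\in\bL^2_\mu$, fix $\delta>0$, and split each truncation set according to which summand carries at least half of $|\phi|$: since $|\phi|\le|\phi^1|+|\phi^2|$, at every point either $|\phi^1|\ge|\phi|/2$ or $|\phi^2|\ge|\phi|/2$. On $\{|\phi|>\delta\}$, where $|\phi^1|\ge|\phi|/2$ one has $|\phi|^p\le 2^p|\phi^1|^p$, which is $\mu$-integrable; where $|\phi^2|\ge|\phi|/2$ one has $|\phi^2|>\delta/2$, so the second inequality (threshold $\delta/2$) gives $|\phi|^p\le 2^p|\phi^2|^p\le 2^p(\delta/2)^{p-2}|\phi^2|^2$, again integrable, whence $\phi\1_{|\phi|>\delta}\in\bL^p_\mu$. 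On $\{|\phi|\le\delta\}$, where $|\phi^2|\ge|\phi|/2$ one bounds $|\phi|^2\le 4|\phi^2|^2$ directly; where $|\phi^1|\ge|\phi|/2$ one first uses the first inequality and then domination, $|\phi|^2\le\delta^{2-p}|\phi|^p\le\delta^{2-p}2^p|\phi^1|^p$, whence $\phi\1_{|\phi|\le\delta}\in\bL^2_\mu$. As $\delta>0$ was arbitrary the truncation condition holds for every $\delta$, so no separate argument on the dependence in $\delta$ is needed: combining the two implications gives the stated equivalence.

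The inclusion now follows at once. If $\|\phi\|_{\bL^p_\mu+\bL^2_\mu}<\infty$, the characterisation gives $\phi\1_{|\phi|>1}\in\bL^p_\mu$, and since $p\ge1$ one has $|x|\le|x|^p$ for $|x|>1$, so $\phi\1_{|\phi|>1}\in\bL^1_\mu$ as well; together with $\phi\1_{|\phi|\le1}\in\bL^2_\mu$ this writes $\phi$ as a sum in $\bL^1_\mu+\bL^2_\mu$. Replacing $\mu$ by $\nu=\mu\otimes\mathrm{Leb}$ leaves every estimate unchanged, which yields the $\nu$-version. The main obstacle is exactly the converse direction: the right move is to partition by which summand dominates $|\phi|/2$ and to exploit the truncation level $\delta$ to trade the \emph{wrong} exponent for the right one. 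This conversion is precisely where $p\le2$ enters, through the two inequalities above; with the truncation directions as stated (small values into $\bL^2_\mu$, large values into $\bL^p_\mu$), the argument is tailored to $p\in[1,2]$, which is the only range the paper uses.
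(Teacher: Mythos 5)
Your proof is correct and follows essentially the same route as the paper: both establish the truncation characterisation from an arbitrary decomposition $\phi=\phi^1+\phi^2$ by trading exponents via $|x|^p\le\delta^{p-2}|x|^2$ for $|x|>\delta$ and $|x|^2\le\delta^{2-p}|x|^p$ for $|x|\le\delta$, and then read off the inclusion $\bL^p_\mu+\bL^2_\mu\subset\bL^1_\mu+\bL^2_\mu$ from $\phi\1_{|\phi|>1}\in\bL^p_\mu\subset$ (large values) $\bL^1_\mu$ — your dichotomy by which summand carries $|\phi|/2$ is merely a tidier bookkeeping of the paper's three-term pointwise dominations with thresholds $\delta/2$ and $2\delta$, and your dropping of the $\eps$-optimal decomposition is harmless since the paper never uses its near-optimality either. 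Your closing caveat that the argument is confined to $p\le 2$ is consistent with the paper's own proof, which likewise invokes $p<2$ explicitly despite the lemma's stated range $p\in[1,\infty)$.
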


\section{Complete proof for $p<2$ of Proposition 3 and Theorem 2}

We say that the Condition \textbf{(C)} holds if $\P$-a.s.
$$\langle  \check{y}, f(t,y,z,\psi) \rangle \leq f_t + \al |y| + K |z|+ K \|\psi\|_{\bL^1_\mu + \bL^2_\mu},$$
with $K \geq 0$ and $f_t$ is a non-negative progressively measurable process. Note that compared to \cite{krus:popi:14}, we change the norm on $\psi$. Recall that for $y\in \R^d\setminus\{0\}$ we write $\check{y}=\frac{1}{|y|}y$ and $\check 0 =0$. Let us denote $F = \int_0^T f_r dr$.

\vspace{0.5cm}
\noindent \textbf{ Proposition 3} 
\textit{ Let the Condition} \textbf{(C)} \textit{hold and let be $(Y,Z,\psi,M) \in \cE^p(0,T)$ be a solution of BSDE \eqref{eq:gene_BSDE} and assume moreover that $F^p$ is integrable. Then there exists a constant $C$ depending on $p$, $K$ and $T$ such that }
\begin{eqnarray*}
\E \left[\sup_{t\in[0,T]}   |Y_t|^p +  \left( \int_0^T  |Z_t|^2 dt \right)^{p/2} + \left(  [ M ]_T \right)^{p/2}+  \left(  \int_0^T  \int_{\cU} |\psi_s(u)|^2 \pi(du, ds) \right)^{p/2}\right] 
\leq C \E \left[|\xi|^p  + \left( \int_0^T f_r dr \right)^p \right].
\end{eqnarray*}
\vspace{0.3cm}

A comment before the proof. If $\psi \in \bL^p_\pi$, then Inequality \eqref{eq:BJ_ineq} and Lemma \ref{lem:sum_norm_estim} imply that $\P$-a.s. $\psi \in \bL^p_\nu + \bL^2_\nu$ and from Lemma \ref{lem:sum_Lp_spaces}, $\psi \in \bL^1_\nu + \bL^2_\nu$. Hence the integrand $\psi$ is in the required function space (see Condition (H3)).

\vspace{0.5cm}
\noindent \begin{proof}

\noindent \textbf{ Step 1:} We prove first that there exist two constants $\beta$ (depending on $K$, $\al$ and $p$) and $\kappa_{p,\beta}$ such that 
\begin{eqnarray} \nonumber
&&\E \int_{0}^{T} e^{\beta s} |Y_{s}|^{p-2} |Z_s|^2 \1_{Y_s\neq 0} ds + \E \int_{0}^{T} e^{\beta s} |Y_{s-}|^{p-2}  \1_{Y_{s-}\neq 0} d[ M ]^c_s \\ \nonumber
&& +  \E \int_{0}^{T}  \int_\cU e^{\beta s} |\psi_s(u)|^2  \left( |Y_{s-}|^2 \vee  |Y_{s-} +\psi_s(u)|^2 \right)^{p/2-1} \1_{|Y_{s-}|\vee |Y_{s-} + \psi_s(u)| \neq 0}\pi(du,ds) \\  \nonumber
&& +  \E \sum_{0< s \leq T} e^{\beta s}  \left( |Y_{s-}|^2 \vee  |Y_{s-} + \Delta M_s|^2 \right)^{p/2-1}  \1_{|Y_{s-}|\vee |Y_{s-} + \Delta M_s| \neq 0} |\Delta M_s|^2\\   \label{eq:Lp_estim_p_leq_2_mart}
&& + \E \int_{0}^{T} e^{\beta s}  |Y_s|^p  ds \leq \kappa_{p,\beta}  \E(X).
\end{eqnarray}
where 
$$ X = e^{\beta T} |\xi|^p +   p \int_0^T e^{\beta s} |Y_s|^{p-1} f_s ds.$$
In the following let $c(p) = p(p-1)/2$. For some constant $\beta\in \R$, we apply It\^o's formula (see Corollary 1 in \cite{krus:popi:14}) for $\tau \in \cT_T$ to $e^{\beta t} |Y_{t}|^p$ to obtain
\begin{eqnarray*}
&& e^{\beta (t\wedge \tau)} |Y_{t\wedge \tau}|^p + c(p) \int_{t\wedge \tau}^{\tau} e^{\beta s}  |Y_{s}|^{p-2} |Z_s|^2 \1_{Y_s\neq 0} ds + c(p) \int_{t\wedge \tau}^{\tau} e^{\beta s}  |Y_{s-}|^{p-2}  \1_{Y_{s-}\neq 0} d[ M ]^c_s \\
&& \leq e^{\beta \tau}|Y_{\tau}|^p+ p \int_{t\wedge \tau}^{\tau}e^{\beta s} |Y_{s-}|^{p-1} \check{Y}_{s-} f(s,Y_s,Z_s) ds -\beta  \int_{t\wedge \tau}^{\tau}e^{\beta s} |Y_s|^{p} ds \\
&&\quad -  p \int_{t\wedge \tau}^{\tau} e^{\beta s} |Y_{s-}|^{p-1} \check{Y}_{s-} Z_s dW_s    -  p  \int_{t\wedge \tau}^{\tau} e^{\beta s} |Y_{s-}|^{p-1} \check{Y}_{s-} dM_s\\
&&\quad  -  p \int_{t\wedge \tau}^{\tau} e^{\beta s} |Y_{s-}|^{p-1} \check{Y}_{s-}  \int_\cU \psi_s(u) \tpi(du,ds)  \\
& &\quad -  \int_{t\wedge \tau}^{\tau} e^{\beta s}  \int_\cU \left[ |Y_{s-}+\psi_s(u)|^p -|Y_{s-}|^p - p|Y_{s-}|^{p-1} \check{Y}_{s-} \psi_s(u) \right] \pi(du,ds) \\
&& \quad - \sum_{t\wedge \tau< s \leq \tau} e^{\beta s}  \left[ |Y_{s-}+\Delta M_s|^p - |Y_{s-}|^p - p|Y_{s-}|^{p-1}  \check{Y}_{s-} \Delta M_s \right] .
\end{eqnarray*}
With Condition (C) this becomes
\begin{eqnarray*}
&&e^{\beta (t\wedge \tau)} |Y_{t\wedge \tau}|^p + c(p) \int_{t\wedge \tau}^{\tau} e^{\beta s} |Y_{s}|^{p-2} |Z_s|^2 \1_{Y_s\neq 0} ds + c(p) \int_{t\wedge \tau}^{\tau} e^{\beta s} |Y_{s-}|^{p-2}  \1_{Y_{s-}\neq 0} d[ M ]^c_s \\
&& \leq e^{\beta \tau}  |Y_{\tau}|^p + \int_{t\wedge \tau}^{\tau} e^{\beta s}\left( p|Y_s|^{p-1} f_s + (p \al-\beta) |Y_s|^p  \right) ds + pK \int_{t\wedge \tau}^{\tau} e^{\beta s}|Y_s|^{p-1} |Z_s| ds  \\
&& \qquad + pK \int_{t\wedge \tau}^{\tau} e^{\beta s}|Y_{s-}|^{p-1} \|\psi_s\|_{\bL^1_\mu + \bL^2_\mu} ds  
-  p \int_{t\wedge \tau}^{\tau} e^{\beta s}|Y_s|^{p-1} \check{Y}_s Z_s dW_s  \\
& &\qquad  -  p  \int_{t\wedge \tau}^{\tau} e^{\beta s}|Y_{s-}|^{p-1} \check{Y}_{s-} dM_s -  p \int_{t\wedge \tau}^{\tau} e^{\beta s}|Y_{s-}|^{p-1} \check{Y}_{s-}  \int_\cU \psi_s(u) \tpi(du,ds)  \\
& &\quad -  \int_{t\wedge \tau}^{\tau} e^{\beta s} \int_\cU \left[ |Y_{s-}+\psi_s(u)|^p -|Y_{s-}|^p - p|Y_{s-}|^{p-1}  \check{Y}_{s-} \psi_s(u) \right] \pi(du,ds) \\
&& \quad - \sum_{t\wedge \tau< s \leq \tau} e^{\beta s} \left[ |Y_{s-}+\Delta M_s|^p - |Y_{s-}|^p - p|Y_{s-}|^{p-1} \check{Y}_{s-}  \Delta M_s \right].
\end{eqnarray*}
Moreover by Young's inequality
 \begin{equation*}
pKe^{\beta s}  |Y_s|^{p-1} |Z_s| \leq \frac{pK^2}{p-1}e^{\beta s} |Y_s|^p + \frac{c(p)}{2} e^{\beta s} |Y_{s}|^{p-2} |Z_s|^2 \1_{Y_s\neq 0}. 
\end{equation*}
Hence we obtain that
\begin{eqnarray} \nonumber
&&e^{\beta (t\wedge \tau)} |Y_{t\wedge \tau}|^p + \frac{c(p)}{2} \int_{t\wedge \tau}^{\tau} e^{\beta s} |Y_{s}|^{p-2} |Z_s|^2 \1_{Y_s\neq 0} ds + c(p) \int_{t\wedge \tau}^{\tau} e^{\beta s} |Y_{s-}|^{p-2}  \1_{Y_{s-}\neq 0} d[ M ]^c_s \\ \nonumber
&& \leq e^{\beta \tau}  |Y_{\tau}|^p + \int_{t\wedge \tau}^{\tau} e^{\beta s}\left( p|Y_s|^{p-1} f_s + (p \al + \frac{pK^2}{p-1}-\beta) |Y_s|^p  \right) ds   \\ \nonumber
&& \qquad + pK \int_{t\wedge \tau}^{\tau} e^{\beta s}|Y_{s-}|^{p-1} \|\psi_s\|_{\bL^1_\mu + \bL^2_\mu} ds  
-  p \int_{t\wedge \tau}^{\tau} e^{\beta s}|Y_s|^{p-1} \check{Y}_s Z_s dW_s  \\ \nonumber
& &\qquad  -  p  \int_{t\wedge \tau}^{\tau} e^{\beta s}|Y_{s-}|^{p-1} \check{Y}_{s-} dM_s -  p \int_{t\wedge \tau}^{\tau} e^{\beta s}|Y_{s-}|^{p-1} \check{Y}_{s-}  \int_\cU \psi_s(u) \tpi(du,ds)  \\ \nonumber
& &\quad -  \int_{t\wedge \tau}^{\tau} e^{\beta s} \int_\cU \left[ |Y_{s-}+\psi_s(u)|^p -|Y_{s-}|^p - p|Y_{s-}|^{p-1}  \check{Y}_{s-} \psi_s(u) \right] \pi(du,ds) \\ \label{eq:first_tech_ineq}
&& \quad - \sum_{t\wedge \tau< s \leq \tau} e^{\beta s} \left[ |Y_{s-}+\Delta M_s|^p - |Y_{s-}|^p - p|Y_{s-}|^{p-1} \check{Y}_{s-}  \Delta M_s \right].
\end{eqnarray}
In particular we have:
\begin{eqnarray} \nonumber
&& 0 \leq \int_{t\wedge \tau}^{\tau} e^{\beta s} \int_\cU \left[ |Y_{s-}+\psi_s(u)|^p -|Y_{s-}|^p - p|Y_{s-}|^{p-1}  \check{Y}_{s-} \psi_s(u) \right] \pi(du,ds) \\ \nonumber
&& \quad \leq e^{\beta \tau}  |Y_{\tau}|^p + \int_{t\wedge \tau}^{\tau} e^{\beta s}\left( p|Y_s|^{p-1} f_s + (p \al + \frac{pK^2}{p-1}-\beta) |Y_s|^p  \right) ds   \\ \nonumber
&& \qquad + pK \int_{t\wedge \tau}^{\tau} e^{\beta s}|Y_{s-}|^{p-1} \|\psi_s\|_{\bL^1_\mu + \bL^2_\mu} ds  
-  p \int_{t\wedge \tau}^{\tau} e^{\beta s}|Y_s|^{p-1} \check{Y}_s Z_s dW_s  \\ \label{eq:control_integral_pi}
& &\qquad  -  p  \int_{t\wedge \tau}^{\tau} e^{\beta s}|Y_{s-}|^{p-1} \check{Y}_{s-} dM_s -  p \int_{t\wedge \tau}^{\tau} e^{\beta s}|Y_{s-}|^{p-1} \check{Y}_{s-}  \int_\cU \psi_s(u) \tpi(du,ds),
\end{eqnarray}
where the first inequality is due to convexity. Now since $(Y,\psi)\in \bD^p \times \bL^p_\pi$, Inequalities \eqref{eq:BJ_ineq} and \eqref{eq:int_norm_sum_banach} and Young's inequality give:
\begin{eqnarray*} 
pK \E \int_{0}^{T} e^{\beta s}|Y_{s-}|^{p-1} \|\psi_s\|_{\bL^1_\mu + \bL^2_\mu} ds 
&\leq& pK \E \left[ \sup_{s\in [0,T]} \left( e^{\beta (p-1) s/p}|Y_{s-}|^{p-1} \right) \int_{0}^{T} e^{\beta s/p} \|\psi_s\|_{\bL^1_\mu + \bL^2_\mu} ds \right] \\
& \leq & K(p-1 ) \E \left[ \sup_{s\in [0,T]} \left( e^{\beta s}|Y_{s}|^{p} \right) \right] + K \E \left[ \left( \int_{0}^{T} e^{\beta s/p} \|\psi_s\|_{\bL^1_\mu + \bL^2_\mu} ds\right)^p \right]< +\infty
\end{eqnarray*}
and since $F^p$ is integrable
\begin{eqnarray*} 
&& \E  \int_{0}^{T} e^{\beta s} |Y_s|^{p-1} f_s ds \leq  \E \left[ \sup_{s\in [0,T]} \left( e^{\beta s}|Y_{s}|^{p} \right) \right] +  \E \left[ \left( \int_{0}^{T} e^{\beta s/p} f_s ds\right)^p \right]< +\infty.
\end{eqnarray*}
Using a fundamental sequence of stopping times $\tau_k$ for the local martingale 
$$\int_{0}^{.} e^{\beta s} |Y_{s-}|^{p-1} \check{Y}_{s-} \left( Z_s dW_s  + dM_s +  \int_\cU \psi_s(u) \tpi(du,ds) \right)$$
and taking $\tau=\tau_k$ and the expectation in \eqref{eq:control_integral_pi}, this local martingale term will disappear in \eqref{eq:control_integral_pi}. Then since $Y \in \bD^p(0,T)$, by monotone convergence theorem we obtain when $k$ goes to $\infty$
\begin{equation} \label{eq:expectation_pi}
0\leq \E  \int_{0}^{T} e^{\beta s} \int_\cU \left[ |Y_{s-}+\psi_s(u)|^p -|Y_{s-}|^p - p|Y_{s-}|^{p-1}  \check{Y}_{s-} \psi_s(u) \right] \pi(du,ds) < +\infty.
\end{equation}

From Lemma \ref{lem:tech_inequality} we choose $\eps > 0$ depending on $p$ and $K$ (see \eqref{eq:estim_eps} for a possible choice of $\eps$) and we fix $\delta = \vartheta(\eps,p)|Y_{s-}|$ if $Y_{s-}\neq 0$ (and any $\delta > 0$ if $Y_{s-}=0$) where $\vartheta$ is defined in Lemma \ref{lem:tech_inequality}. From the norm definition on $\bL^1_\mu + \bL^2_\mu$ and Young's inequality it follows that
\begin{eqnarray*}
pK \int_{t\wedge \tau}^{\tau} e^{\beta s}|Y_{s-}|^{p-1} \|\psi_s\|_{\bL^1_\mu + \bL^2_\mu} ds 
&\leq & pK\int_{t\wedge \tau}^{\tau} e^{\beta s} |Y_{s-}|^{p-1} \left(  \|\psi_s \1_{|\psi_s| < \delta} \|_{\bL^2_\mu} + \|\psi_s \1_{|\psi_s|\geq \delta} \|_{\bL^1_\mu}\right) ds  \\
 &\leq& \frac{p K^2}{2\eps}\int_{t\wedge \tau}^{\tau} e^{\beta s} |Y_{s-}|^{p} ds +\frac{p\eps}{2} \int_{t\wedge \tau}^{\tau} e^{\beta s} |Y_{s-}|^{p-2} \1_{Y_{s-} \neq 0} \|\psi_s  \1_{|\psi_s| < \delta}  \|^2_{\bL^2_\mu} ds \\
&& \qquad + p K\int_{t\wedge \tau}^{\tau} e^{\beta s} |Y_{s-}|^{p-1} \|\psi_s \1_{|\psi_s|\geq \delta} \|_{\bL^1_\mu}ds.
\end{eqnarray*}
From Lemma 9 in \cite{krus:popi:14}
 \begin{eqnarray*}
&& \int_{t\wedge \tau}^{\tau}  \int_\cU e^{\beta s} \left[ |Y_{s-}+\psi_s(u)|^p -|Y_{s-}|^p - p|Y_{s-}|^{p-1}  \check{Y}_{s-} \psi_s(u) \right] \pi(du,ds) \\
&& \quad \geq c(p) \int_{t\wedge \tau}^{\tau}  \int_\cU e^{\beta s} |\psi_s(u)|^2  \left( |Y_{s-}|^2 \vee  |Y_{s-} +\psi_s(u)|^2 \right)^{p/2-1} \1_{|Y_{s-}|\vee |Y_{s-} + \psi_s(u)| \neq 0} \pi(du,ds).
\end{eqnarray*}
and
 \begin{eqnarray*}
&&\sum_{t\wedge \tau< s \leq \tau} e^{\beta s} \left[ |Y_{s-}+\Delta M_s|^p - |Y_{s-}|^p - p|Y_{s-}|^{p-1} \check{Y}_{s-} \Delta M_s \right]  \\
&& \quad \geq c(p)  \sum_{t\wedge \tau< s \leq \tau} e^{\beta s}|\Delta M_s|^2  \left( |Y_{s-}|^2 \vee  |Y_{s-} + \Delta M_s|^2 \right)^{p/2-1}  \1_{|Y_{s-}|\vee |Y_{s-} + \Delta M_s| \neq 0}
\end{eqnarray*}
Therefore from Inequality \eqref{eq:first_tech_ineq} we deduce the following inequality for any $\eps \in (0,+\infty)$
\begin{eqnarray} \nonumber
&&e^{\beta (t\wedge \tau)} |Y_{t\wedge \tau}|^p + \frac{c(p)}{2} \int_{t\wedge \tau}^{\tau} e^{\beta s} |Y_{s}|^{p-2} |Z_s|^2 \1_{Y_s\neq 0} ds + c(p) \int_{t\wedge \tau}^{\tau} e^{\beta s} |Y_{s-}|^{p-2}  \1_{Y_{s-}\neq 0} d[ M ]^c_s \\ \nonumber
&& + \frac{c(p)}{2}  \int_{t\wedge \tau}^{\tau}  \int_\cU e^{\beta s} |\psi_s(u)|^2  \left( |Y_{s-}|^2 \vee  |Y_{s-} +\psi_s(u)|^2 \right)^{p/2-1} \1_{|Y_{s-}|\vee |Y_{s-} + \psi_s(u)| \neq 0}\pi(du,ds) \\  \nonumber
&& + c(p) \sum_{t\wedge \tau< s \leq \tau} e^{\beta s}  \left( |Y_{s-}|^2 \vee  |Y_{s-} + \Delta M_s|^2 \right)^{p/2-1}  \1_{|Y_{s-}|\vee |Y_{s-} + \Delta M_s| \neq 0} |\Delta M_s|^2\\  \nonumber
&& \leq e^{\beta \tau}  |Y_{\tau}|^p + p \int_{t\wedge \tau}^{\tau} e^{\beta s} |Y_s|^{p-1} f_s ds + \int_{t\wedge \tau}^{\tau} e^{\beta s} \left(p \al-\beta + \frac{pK^2}{p-1} +\frac{pK^2}{2\eps} \right) |Y_s|^p  ds  \\ \nonumber
&& \quad -  p \int_{t\wedge \tau}^\tau e^{\beta s} |Y_{s-}|^{p-1} \check{Y}_{s-} \left( Z_s dW_s  + dM_s +  \int_\cU \psi_s(u) \tpi(du,ds) \right) \\ \nonumber
& &\quad - \frac{1}{2} \int_{t\wedge \tau}^{\tau} e^{\beta s} \int_\cU \left[ |Y_{s-}+\psi_s(u)|^p -|Y_{s-}|^p - p|Y_{s-}|^{p-1}  \check{Y}_{s-} \psi_s(u) \right] \tpi(du,ds) \\ \nonumber
& &\quad - \frac{1}{2} \int_{t\wedge \tau}^{\tau} e^{\beta s} \int_\cU \left[ |Y_{s-}+\psi_s(u)|^p -|Y_{s-}|^p - p|Y_{s-}|^{p-1}  \check{Y}_{s-} \psi_s(u) \right] \mu(du) ds \\ \nonumber
&& \quad +\frac{p\eps}{2} \int_{t\wedge \tau}^{\tau} e^{\beta s} |Y_{s-}|^{p-2} \1_{Y_{s-}\neq 0}\|\psi_s  \1_{|\psi_s| < \delta}  \|^2_{\bL^2_\mu} ds \\ \label{eq:Lp_apriori_estim_1}
&& \quad + p K\int_{t\wedge \tau}^{\tau} e^{\beta s} |Y_{s-}|^{p-1} \|\psi_s \1_{|\psi_s|\geq \delta} \|_{\bL^1_\mu}ds . 
\end{eqnarray}
Let us explain how to deal with this inequality. 
\begin{itemize}
\item For $\eps > 0$ fixed by Lemma \ref{lem:tech_inequality}, we can take $\beta$ large enough such that 
$$\beta >p \al + \frac{pK^2}{p-1} +\frac{pK^2}{2\eps}$$
and the term  $\int_{0}^{.} e^{\beta s} |Y_s|^p  ds$ can be removed (or put on the left-hand side). Again $\beta$ depends only on $\al$, $K$ and $p$.
\item Using again the fundamental sequence of stopping times $\tau_k$ for the local martingale 
$$\int_{0}^{.} e^{\beta s} |Y_{s-}|^{p-1} \check{Y}_{s-} \left( Z_s dW_s  + dM_s +  \int_\cU \psi_s(u) \tpi(du,ds) \right)$$
and taking $\tau=\tau_k$ and the expectation in \eqref{eq:Lp_apriori_estim_1}, this local martingale term will disappear. 
\item From Lemma 3.67 in \cite{jaco:79} and \eqref{eq:expectation_pi} we deduce
$$0\le \E \int_{0}^{T} e^{\beta s} \int_\cU \left[ |Y_{s-}+\psi_s(u)|^p -|Y_{s-}|^p - p|Y_{s-}|^{p-1}  \check{Y}_{s-} \psi_s(u) \right] \mu(du) ds < +\infty.$$

This implies $\P$-a.s.\ that 
\begin{eqnarray} \nonumber
&& -\frac{1}{2}  \int_{0}^{\tau} e^{\beta s} \int_\cU \left[ |Y_{s-}+\psi_s(u)|^p -|Y_{s-}|^p - p|Y_{s-}|^{p-1}  \check{Y}_{s-} \psi_s(u) \right] \mu(du) ds \\ \nonumber
&& \quad +\frac{p\eps}{2} \int_{0}^{\tau} e^{\beta s} |Y_{s-}|^{p-2}\1_{Y_{s-}\neq 0} \int_\cU |\psi_s(u)|^2  \1_{|\psi_s(u)| < \delta} \mu(du) ds \\ \nonumber
&& \quad +  p K  \int_{0}^{\tau} e^{\beta s} |Y_{s-}|^{p-1}\int_\cU |\psi_s(u)| \1_{|\psi_s(u)|\geq \delta} \mu(du) ds \\ 
&&=\frac{1}{2}\int_{0}^{\tau} e^{\beta s} \int_\cU \left[ \Gamma(Y_{s-},\psi_s(u),K,\eps,p) - \Psi(Y_{s-},\psi_s(u),p) \right] \mu(du) ds, \label{eq:non_pos_int}
\end{eqnarray}
with
\begin{eqnarray} \label{eq:def_Psi}
\Psi(a,b,p) &=& |a+b|^p - |a|^p - p |a|^{p-2} \langle a, b \rangle \1_{a \neq 0} \\ \label{eq:def_Gamma}
\Gamma(a,b,K,\eps,p) & = & 2Kp|a|^{p-1} |b| \1_{|b| \geq \vartheta(\eps,p)|a|} + p\eps |a|^{p-2}|b|^2 \1_{|b| < \vartheta(\eps,p)|a|}.
\end{eqnarray}
Recall that we have chosen $\delta = \vartheta(\eps,p)|Y_{s-}|$ if $Y_{s-}\neq 0$ (and any $\delta > 0$ if $Y_{s-}=0$). From Lemma \ref{lem:tech_inequality}, for any $(\omega,s,u)\in \Omega \times [0,T] \times \cU$, 
$$\Gamma(Y_{s-},\psi_s(u),K,\eps,p) - \Psi(Y_{s-},\psi_s(u),p) \leq 0.$$
Hence the integral \eqref{eq:non_pos_int} is non positive: $\P$-a.s.
$$\int_{0}^{\tau} e^{\beta s} \int_\cU \left[ \Gamma(Y_{s-},\psi_s(u),K,\eps,p) - \Psi(Y_{s-},\psi_s(u),p) \right] \mu(du) ds \leq 0.$$

\item Now in \eqref{eq:Lp_apriori_estim_1} the only uncontrolled remaining term on the right-hand side will be:
\begin{eqnarray*}
- \frac{1}{2} \int_{t\wedge \tau}^{\tau} e^{\beta s} \int_\cU \left[ |Y_{s-}+\psi_s(u)|^p -|Y_{s-}|^p - p|Y_{s-}|^{p-1}  \check{Y}_{s-} \psi_s(u) \right] \tpi(du,ds) 
\end{eqnarray*}
which is a local martingale. Thus it can be cancelled using another fundamental sequence $\hat \tau_k$.
\end{itemize}
Thereby \eqref{eq:Lp_apriori_estim_1} gives for $\tau =\tau_k \wedge \widehat \tau_k$
\begin{eqnarray} \nonumber
&&\E \int_{0}^{\tau} e^{\beta s} \left(\beta - p \al- \frac{pK^2}{p-1} -\frac{pK^2}{2\eps} \right) |Y_s|^p  ds \\ \nonumber
&& + \frac{c(p)}{2}\E \int_{0}^{\tau} e^{\beta s} |Y_{s}|^{p-2} |Z_s|^2 \1_{Y_s\neq 0} ds + c(p) \E \int_{0}^{\tau} e^{\beta s} |Y_{s-}|^{p-2}  \1_{Y_{s-}\neq 0} d[ M ]^c_s \\ \nonumber
&& + \frac{c(p)}{2} \E \int_{0}^{\tau}  \int_\cU e^{\beta s} |\psi_s(u)|^2  \left( |Y_{s-}|^2 \vee  |Y_{s-} +\psi_s(u)|^2 \right)^{p/2-1} \1_{|Y_{s-}|\vee |Y_{s-} + \psi_s(u)| \neq 0}\pi(du,ds) \\  \nonumber
&& + c(p) \E \sum_{0< s \leq \tau} e^{\beta s}  \left( |Y_{s-}|^2 \vee  |Y_{s-} + \Delta M_s|^2 \right)^{p/2-1}  \1_{|Y_{s-}|\vee |Y_{s-} + \Delta M_s| \neq 0} |\Delta M_s|^2\\   \label{eq:Lp_estim_p_leq_2}
&& \leq \E \left( e^{\beta \tau}  |Y_{\tau}|^p\right)  + p \E \int_{0}^{\tau} e^{\beta s} |Y_s|^{p-1} f_s ds .\end{eqnarray}
Recall that $X$ is the quantity
$$X =  e^{\beta T} |\xi|^p +   p \int_0^T e^{\beta s} |Y_s|^{p-1} f_s ds.$$
Then we can pass to the limit on $k$ in \eqref{eq:Lp_estim_p_leq_2}, and we obtain the same estimate for $\tau=T$ and $\E(X)$ on the right-hand side, that is \eqref{eq:Lp_estim_p_leq_2_mart}.

\noindent \textbf{ Step 2:} In this part of the proof we prove that for some constant $\kappa_{p}$ (depending also on $\beta$ and $K$):
$$\E \left( \sup_{t\in [0,T]} e^{\beta t} |Y_t|^p\right) =\E \left( Y_*^{\beta,p} \right) \leq \kappa_{p}\E \left[ X + \int_{0}^{T} e^{\beta s}|Y_{s-}|^{p-1} \|\psi_s\|_{\bL^1_\mu + \bL^2_\mu} ds\right].$$
From \eqref{eq:control_integral_pi} with $t=0$ and $\tau=T$, and from the choice of $\beta$ we have:
\begin{eqnarray*} \nonumber
&& 0 \leq \int_{0}^T e^{\beta s} \int_\cU \left[ |Y_{s-}+\psi_s(u)|^p -|Y_{s-}|^p - p|Y_{s-}|^{p-1}  \check{Y}_{s-} \psi_s(u) \right] \pi(du,ds) \\ \nonumber
&& \quad \leq X  + pK \int_{0}^{T} e^{\beta s}|Y_{s-}|^{p-1} \|\psi_s\|_{\bL^1_\mu + \bL^2_\mu} ds  +  p \sup_{t\in[0,T]} \left( |\Gamma_t| + |\Theta_t | + |\Xi_t| \right) ,
\end{eqnarray*}
where
\begin{eqnarray*}
\Gamma_t & = & \int_0^t e^{\beta s}|Y_s|^{p-1} \check{Y}_s Z_s dW_s ,\\
\Theta_t & = & \int_0^t e^{\beta s}|Y_s|^{p-1} \check{Y}_s dM_s, \quad \Xi_t =  \int_0^t e^{\beta s}|Y_s|^{p-1} \check{Y}_s  \int_\cU \psi_s(u) \tpi(du,ds)  .
\end{eqnarray*}
From Theorem 3.15 in \cite{jaco:79}, taking the expectation in the previous inequality we obtain:
\begin{eqnarray} \nonumber
&& 0 \leq\E \int_{0}^T e^{\beta s} \int_\cU \left[ |Y_{s-}+\psi_s(u)|^p -|Y_{s-}|^p - p|Y_{s-}|^{p-1}  \check{Y}_{s-} \psi_s(u) \right] \mu(du)ds \\ \label{eq:control_loc_mart}
&& \quad \leq \E(X)  + pK \E \int_{0}^{T} e^{\beta s}|Y_{s-}|^{p-1} \|\psi_s\|_{\bL^1_\mu + \bL^2_\mu} ds  +  p \E \left[ \sup_{t\in[0,T]} \left( |\Gamma_t| + |\Theta_t | + |\Xi_t| \right)\right] .
\end{eqnarray}
Coming back to \eqref{eq:Lp_apriori_estim_1}, the last three terms on the right-hand side are non positive (by the same arguments as in Step 1). From the convexity of $|x|^p$, the last local martingale can be controlled by \eqref{eq:control_loc_mart}. Hence from the Burkholder-Davis-Gundy inequality we obtain
\begin{eqnarray*}
\E \left( Y_*^{p,\beta} \right) & \leq & \E\left( X \right) + pK \E \int_{0}^{T} e^{\beta s}|Y_{s-}|^{p-1} \|\psi_s\|_{\bL^1_\mu + \bL^2_\mu} ds + k_p \E \left( [\Gamma]_T^{1/2} + [\Theta]_T^{1/2} + [\Xi]_T^{1/2} \right).
\end{eqnarray*}
The bracket $[\Gamma]_T^{1/2}$ can be handled as in \cite{bria:dely:hu:03}:
\begin{eqnarray*}
k_p \E \left(  [\Gamma]_T^{1/2} \right) & \leq & \frac{1}{6}  \E \left( Y_*^{p,\beta} \right) +  \frac{3k_p^2 }{2} \E \left( \int_0^T e^{\beta s} |Y_{s}|^{p-2} |Z_s|^2 \1_{Y_s\neq 0} ds \right).
\end{eqnarray*}
For the other terms since $p>1$ we have
\begin{eqnarray*}
k_p \E \left(  [\Theta]_T^{1/2} \right) & \leq & k_p \E\left[ \left( \int_0^T e^{2\beta s} \left( |Y_{s-}|^2 \vee  |Y_{s-} + \Delta M_s|^2 \right)^{p-1}  \1_{|Y_{s-}|\vee |Y_{s-} + \Delta M_s| \neq 0} d[M]_s \right)^{1/2} \right] \\
& \leq & k_p \E\left[ \left( \sup_{s \in [0,T]} e^{\beta s}\left( |Y_{s-}|^2 \vee  |Y_{s-} + \Delta M_s|^2 \right)^{p/2}\right)^{1/2} \right. \\
&& \qquad \quad \left. \left( \int_0^T e^{\beta s}\left( |Y_{s-}|^2 \vee  |Y_{s-} + \Delta M_s|^2 \right)^{p/2-1}  \1_{|Y_{s-}|\vee |Y_{s-} + \Delta M_s| \neq 0}d[M]_s \right)^{1/2} \right] \\
& \leq  & \frac{1}{6}  \E \left( Y_*^{p,\beta} \right) +  \frac{3k_p^2 }{2} \E \left( \int_0^T e^{\beta s} |Y_{s-}|^{p-2}  \1_{|Y_{s-}| \neq 0} d[M]^c_s \right. \\
&& \qquad \quad \left. + \sum_{0< s \leq T}  e^{\beta s}\left( |Y_{s-}|^2 \vee  |Y_{s-} + \Delta M_s|^2 \right)^{p/2-1} \1_{|Y_{s-}|\vee |Y_{s-} + \Delta M_s| \neq 0} |\Delta M_s|^2 \right),
\end{eqnarray*}
and by the same argument
\begin{eqnarray*}
&&k_p \E \left(  [\Xi]_T^{1/2} \right) \leq  \frac{1}{6}  \E \left( Y_*^{p,\beta} \right) \\
&&\quad +  \frac{3k_p^2 }{2} \E  \int_{0}^{T} e^{\beta s} \int_\cU |\psi_s(u)|^2  \left( |Y_{s-}|^2 \vee  |Y_{s-} +\psi_s(u)|^2 \right)^{p/2-1} \1_{|Y_{s-}|\vee |Y_{s-} + \psi_s(u)| \neq 0} \pi(du,ds).
\end{eqnarray*}
Using \eqref{eq:Lp_estim_p_leq_2_mart}, we deduce that there exists a constant $\kappa_p$ depending only on $p$ such that
\begin{eqnarray*}
 \E \left( \sup_{t\in [0,T]} e^{\beta t}|Y_t|^p \right)  & \leq & \kappa_p\E \left[ X + \int_{0}^{T} e^{\beta s}|Y_{s-}|^{p-1} \|\psi_s\|_{\bL^1_\mu + \bL^2_\mu} ds\right].
\end{eqnarray*}

\noindent \textbf{ Step 3:} Let us derive now a priori estimates for the martingale part of the BSDE. We use Corollary 1 in \cite{krus:popi:14}: 
\begin{eqnarray} \nonumber
&& \E \left( \int_0^Te^{2\beta s/p}  |Z_s|^2 ds \right)^{p/2}  =  \E \left( \int_0^T e^{2\beta s/p} \1_{Y_s \neq 0} |Z_s|^2 ds\right)^{p/2} \\ \nonumber
&&\quad = \E \left( \int_0^T \left( e^{\beta s/p} \left| Y_{s} \right|\right)^{2-p} e^{\beta s} \left| Y_{s} \right|^{p-2}\1_{Y_s \neq 0}  |Z_s|^2ds \right)^{p/2} \\ \nonumber
&& \quad \leq \E \left[ \left( \sup_{t\in [0,T]} e^{\beta t/p} |Y_{t}|\right)^{p(2-p)/2} \left(\int_0^T e^{\beta s} \left| Y_{s} \right|^{p-2}\1_{Y_s \neq 0}  |Z_s|^2 ds \right)^{p/2}\right]\\ \nonumber
&& \quad \leq \left\{\E \left[ \sup_{t\in [0,T]} e^{\beta t} |Y_{t}|^{p}\right]\right\}^{(2-p)/2}  \left\{\E \int_0^T e^{\beta s} \left| Y_{s} \right|^{p-2} \1_{Y_s \neq 0} |Z_s|^2 ds \right\}^{p/2} \\ \label{eq:trick_control_mart_part}
&& \quad  \leq \frac{2-p}{2} \E \left[ \sup_{t\in [0,T]} e^{\beta t} |Y_{t}|^{p}\right] + \frac{p}{2} \E \int_0^T e^{\beta s} \left| Y_{s} \right|^{p-2}\1_{Y_s \neq 0} |Z_s|^2 ds
\end{eqnarray}
where we have used H\"older's and Young's inequality with $ \frac{2-p}{2} + \frac{p}{2}=1$. With Inequality \eqref{eq:Lp_estim_p_leq_2} we deduce:
\begin{equation*}
\E \left( \int_0^Te^{2\beta s/p} |Z_s|^2 ds \right)^{p/2}  \leq \wtil \kappa_p \E \left[ X + \int_{0}^{T} e^{\beta s}|Y_{s-}|^{p-1} \|\psi_s\|_{\bL^1_\mu + \bL^2_\mu} ds\right].
\end{equation*}
The same argument can be used to control $[M]^c$. For the pure-jump part of $[M]$ we have using the function $u_\eps$ defined in the proof of Lemma 7 in \cite{krus:popi:14}:
\begin{eqnarray*}
&& \E \left( \sum_{0<s\leq T}e^{2\beta s/p} |\Delta M_s|^2 \right)^{p/2} \\
&&\quad = \E \left( \sum_{0<s\leq T} e^{2\beta s/p}\left( u_\eps(|Y_{s-}| \vee  |Y_{s-} + \Delta M_s|) \right)^{2-p}\left(u_\eps( |Y_{s-}| \vee  |Y_{s-} + \Delta M_s| )\right)^{p-2} |\Delta M_s|^2 \right)^{p/2} \\
&& \quad \leq \E \left[ \left( e^{\beta */p}u_\eps(Y_{*})\right)^{p(2-p)/2} \left(\sum_{0<s\leq T} \left( u_\eps(|Y_{s-}| \vee  |Y_{s-} + \Delta M_s|) \right)^{p-2} |\Delta M_s|^2 \right)^{p/2}\right]\\
&& \quad \leq \left\{\E \left[e^{\beta *} \left( u_\eps(Y_{*}) \right)^{p}\right]\right\}^{(2-p)/2} \\
&&\qquad \qquad \times \left\{\E\left(  \sum_{0< s \leq T} \left( u_\eps(|Y_{s-}| \vee  |Y_{s-} + \Delta M_s|) \right)^{p-2} |\Delta M_s|^2 \right) \right\}^{p/2} \\
&& \quad \leq \frac{2-p}{2} \E \left[ e^{\beta *}\left( u_\eps(Y_{*}) \right)^{p}\right] + \frac{p}{2}\E\left(  \sum_{0< s \leq T} \left( u_\eps(|Y_{s-}| \vee  |Y_{s-} + \Delta M_s| )\right)^{p-2} |\Delta M_s|^2 \right).
\end{eqnarray*}
Let $\eps$ go to zero. We use a convergence result, which is a direct consequence of the proof of Lemma 9 in \cite{krus:popi:14} to obtain that
\begin{eqnarray*}
&& \E \left(  \sum_{0<s\leq T}e^{2\beta s/p} |\Delta M_s|^2  \right)^{p/2} \\
&& \quad \leq \frac{2-p}{2} \E \left( e^{\beta *}|Y_{*}|^{p} \right) + \frac{p}{2}\E\left( \sum_{0\leq s < T} e^{\beta s}\left( |Y_{s-}| \vee  |Y_{s-} + \Delta M_s| \right)^{p-2}  \1_{|Y_{s-}|\vee |Y_{s-} + \Delta M_s| \neq 0}|\Delta M_s|^2 \right)\\
&& \quad \leq \wtil \kappa_p \E \left[ X + \int_{0}^{T} e^{\beta s}|Y_{s-}|^{p-1} \|\psi_s\|_{\bL^1_\mu + \bL^2_\mu} ds\right].
\end{eqnarray*}
The same argument shows that 
\begin{eqnarray*}
\E \left( \int_0^Te^{2\beta s/p} \int_\cU |\psi_s(u)|^2 \pi(du,ds) \right)^{p/2} & \leq & \wtil \kappa_p \E \left[ X + \int_{0}^{T} e^{\beta s}|Y_{s-}|^{p-1} \|\psi_s\|_{\bL^1_\mu + \bL^2_\mu} ds\right]. 
\end{eqnarray*}

\noindent \textbf{ Step 4:} Now we prove the wanted estimate. Recall that we have found a constant $\hat \kappa_p$ such that 
 \begin{eqnarray*}
 &&\E\left[  Y^{\beta,p}_* +  \left( \int_0^T e^{2\beta s/p}Z_s^2 ds \right)^{p/2} + \left( \int_0^T e^{2\beta s/p}d[M]_s \right)^{p/2} \right.\\
  &&\qquad \left. 
  + \left( \int_0^T e^{2\beta s/p}\int_\cU |\psi_s(u)|^2 \pi(du,ds) \right)^{p/2}\right] \leq \hat \kappa_p \E \left[ X + \int_{0}^{T} e^{\beta s}|Y_{s-}|^{p-1} \|\psi_s\|_{\bL^1_\mu + \bL^2_\mu} ds\right]
  \end{eqnarray*}
where 
$$X =e^{\beta T}  |\xi|^p +   p \int_0^T e^{\beta s}|Y_s|^{p-1} f_s ds.$$
Using Inequality \eqref{eq:norm_equiv} we know that there exists some constant $K_{p,T}$ such that
 $$\E  \int_{0}^{T} e^{\beta s}\|\psi_s\|^p_{\bL^1_\mu + \bL^2_\mu} ds \leq K_{p,T} \E \left( \int_0^T e^{2\beta s/p}\int_\cU |\psi_s(u)|^2 \pi(du,ds) \right)^{p/2}.$$
Young's inequality leads to: 
 \begin{eqnarray*}
\hat \kappa_p \E \left[ \int_{0}^{T} e^{\beta s}|Y_{s-}|^{p-1} \|\psi_s\|_{\bL^1_\mu + \bL^2_\mu} ds\right] & \leq & (2K_{p,T})^{\frac{1}{p-1}}\frac{(p-1)(\hat \kappa_p)^{\frac{p}{p-1}}}{p^{\frac{p}{p-1}}} \E \int_{0}^{T} e^{\beta s}|Y_{s-}|^{p} ds \\
& + & \frac{1}{2K_{p,T}} \E  \int_{0}^{T} e^{\beta s}\|\psi_s\|^p_{\bL^1_\mu + \bL^2_\mu} ds.
 \end{eqnarray*}
Now from Inequality \eqref{eq:Lp_estim_p_leq_2_mart} 
  \begin{eqnarray*}
 &&\E\left[  Y^{\beta,p}_* +  \left( \int_0^T e^{2\beta s/p}Z_s^2 ds \right)^{p/2} + \left( \int_0^T e^{2\beta s/p}d[M]_s \right)^{p/2} \right.\\
  &&\qquad \left. 
  +\frac{1}{2} \left( \int_0^T e^{2\beta s/p}\int_\cU |\psi_s(u)|^2 \pi(du,ds) \right)^{p/2}\right] \leq \widetilde{C_{p}} \ \E (X )
  \end{eqnarray*}
  with
$$\widetilde{C_{p}}  =   \hat \kappa_p + \kappa_{p,\beta}  (2K_{p,T})^{\frac{1}{p-1}}\frac{(p-1)(\hat \kappa_p)^{\frac{p}{p-1}}}{p^{\frac{p}{p-1}}} .$$
The key point is that $\widetilde{C_{p}}$ depends on $p$, $T$ and the regularity constants of the generator $f$. Then 
 \begin{eqnarray*}
p \widetilde{C_{p}} \E \int_0^T e^{\beta s}|Y_s|^{p-1} f_s ds & \leq & p \widetilde{C_{p}}\left( e^{\beta \frac{p-1}{p}* }|Y_*|^{p-1} \right)\int_0^T e^{\beta s /p}f_s ds\\
 & \leq & \frac{1}{2} \E \left( e^{\beta *} |Y_*|^p\right) + d_p \left(  \int_0^T e^{\beta s/p} f_s ds\right)^p.
 \end{eqnarray*}
Therefore we have proved that for any $\beta$ large enough (with a lower bound depending only on $\al$, $K$ and $p$)
\begin{eqnarray*}
&&\E\left[ \left(  \sup_{t\in [0,T]} e^{\beta t} |Y_t|^p \right)+  \left( \int_0^T e^{2\beta s/p} Z_s^2 ds \right)^{p/2}  + \left( \int_0^T \int_\cU e^{2\beta s/p} |\psi_s(u)|^2 \pi(du,ds) \right)^{p/2}\right. \\
&& \qquad \left.+ \left( \int_0^T e^{2\beta s/p} d[M]_s\right)^{p/2} \right] \leq C \E \left[ e^{\beta T} |\xi|^p+ \left( \int_0^T e^{\beta r/p} f_r dr \right)^p\right]
\end{eqnarray*}
where $C$ just depends on $p$. This gives the desired estimate. 
\end{proof}

\vspace{0.5cm}

\noindent \textbf{ Theorem 2} 
\textit{
Under Assumptions $\mathbf{(H_{ex})}$ and \eqref{eq:int_cond}, there exists a unique solution $(Y,Z,\psi,M)$ in $\cE^p(0,T)$ to the BSDE \eqref{eq:gene_BSDE}. Moreover for some constant $C=C_{p,K,T}$}
\begin{eqnarray*}
&& \E \left[\sup_{t\in[0,T]}   |Y_t|^p +  \left( \int_0^T  |Z_t|^2 dt \right)^{p/2} +  \left(  \int_0^T  \int_{\cU} |\psi_s(u)|^2 \pi(du, ds) \right)^{p/2} + \left(  [ M ]_T \right)^{p/2}\right] \\
&& \qquad \qquad \leq C \E \left[|\xi|^p  + \left( \int_0^T |f(r,0,0,0)| dr \right)^p \right].
\end{eqnarray*}
\begin{proof}
We can follow the proof of Theorem 2 in \cite{krus:popi:14}. If we define 
$$\xi_n = q_n(\xi), \qquad f_n(t,y,z,\psi) = f(t,y,z,\psi) - f(t,0,0,0) + q_n(f(t,0,0,0)),$$
with $q_n(x) = xn/(|x|\vee n)$, thanks to Theorem 1 in \cite{krus:popi:14}, we have a unique solution $(Y^n,Z^n,\psi^n,M^n)$ in $\cE^2$, and thus in $\cE^p$ for any $p> 1$. From $\mathbf{(H_{ex})}$ it can be proved as in \cite{krus:popi:14} that Condition \textbf{(C)} holds:
\begin{eqnarray*}
&& \langle \frac{(Y^m_t-Y^n_t)}{|Y^m_t-Y^n_t|} \1_{Y^m_t-Y^n_t\neq 0},f_{m}(t,Y^{m}_t,Z^{m}_t,\psi^{m}_t) - f_{n}(t,Y^{n}_t,Z^{n}_t,\psi^{n}_t) \rangle \\
&&\quad  \leq |q_m(f(t,0,0,0)) - q_n(f(t,0,0,0))|+ K|Z^m_t-Z^n_t| + K \|\psi^m_t-\psi^n_t\|_{\bL^1_\mu + \bL^2_\mu} .
\end{eqnarray*}
Proposition 3 shows that
\begin{eqnarray*}
&&\E \left[\sup_{t\in[0,T]}  |Y^{m}_t-Y^n_t|^p + \left( \int_0^T |Z^{m}_s-Z^n_s|^2 ds \right)^{p/2}+ \left( [ M^{m}-M^n  ]_T \right)^{p/2} \right. \\
&&\quad  \left.+  \left(  \int_0^T  \int_{\cU} |\psi^{m}_s(u)-\psi^n_s(u)|^2 \pi(du, ds) \right)^{p/2} 
\right] \\
&& \qquad \qquad \leq C \E \left[  |\xi_{m}-\xi_n|^p + \left( \int_0^T |q_{m}(f(r,0,0,0))-q_{n}(f(r,0,0,0))| dr \right)^p \right].
\end{eqnarray*}
Thus $(Y^n,Z^n,\psi^n,M^n)$ is a Cauchy sequence in $\cE^p(0,T)$ and the conclusion follows.
\end{proof}

Again the Bichteler-Jacod inequality \eqref{eq:BJ_ineq} implies that the sequence $(\psi^n)$ is also a Cauchy sequence in $\bL^1_\mu + \bL^2_\mu$ (or in $\bL^p_\mu + \bL^2_\mu$) and the limit $\psi$ belongs to these Banach spaces.

\section{Comparison principle and extension to random terminal time (Theorem 3)}

\subsection{Comparison principle} \label{ssect:comp}

The comparison principle (Proposition 4 in \cite{krus:popi:14}) holds true under Condition $(\mathbf{ H_{comp}})$, which reinforces Assumption (H3). Now for $p\in (1,2)$ we assume that
\begin{enumerate}
\item[(H3')] $f$ is Lipschitz continuous w.r.t.\ $z$ with constant $K$ and for each $(y,z,\psi,\phi) \in \R\times \R^k \times (\bL^1_\mu+\bL^2_\mu)^2$, there exists a predictable process $\kappa = \kappa^{y,z,\psi,\phi} : \Omega \times [0,T] \times \cU \to \R$ such that:
$$f(t,y,z,\psi) - f(t,y,z,\phi) \leq  \int_\cU (\psi(u)-\phi(u)) \kappa^{y,z,\psi,\phi}_t(u) \mu(du)$$
with $\P\otimes \mbox{Leb} \otimes \mu$-a.e.  for any $(y,z,\psi,\psi')$,
\begin{itemize}
\item $-1 \leq \kappa^{y,z,\psi,\phi}_t(u)$
\item $|\kappa^{y,z,\psi,\phi}_t(u)|\leq \ell(u)$, where $\ell$ belongs to $\bL^\infty_\mu \cap \bL^2_\mu$. \end{itemize}
\end{enumerate}
We say that $(\mathbf{ H_{comp}})$ is satisfied if (H1)--(H2) and (H3') hold.
\begin{Lemma} \label{lem:hcomp->hex}
Assumption $(\mathbf{ H_{comp}})$ implies Condition $\mathbf{ (H_{ex})}$, that is $f$ is Lipschitz continuous w.r.t. $\psi$. 
\end{Lemma}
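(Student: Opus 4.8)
The plan is to note that $(\mathbf{H_{comp}})$ and $(\mathbf{H_{ex}})$ both contain conditions (H1) and (H2), so the entire content of the lemma is the implication (H3')$\Rightarrow$(H3). Moreover (H3') already asserts that $f$ is Lipschitz in $z$ with constant $K$, and the triangle inequality
$$|f(t,y,z,\psi)-f(t,y,z',\psi')| \le |f(t,y,z,\psi)-f(t,y,z',\psi)| + |f(t,y,z',\psi)-f(t,y,z',\psi')|$$
separates the $z$- and $\psi$-increments. Hence everything reduces to the Lipschitz estimate in $\psi$ for the norm $\|\cdot\|_{\bL^1_\mu+\bL^2_\mu}$, the generator being frozen in the other arguments.

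To control $|f(t,y,z,\psi)-f(t,y,z,\phi)|$ I would first symmetrize the one-sided bound in (H3'). Applying it to the ordered pair $(\psi,\phi)$ gives an upper bound for $f(t,y,z,\psi)-f(t,y,z,\phi)$ by $\int_\cU(\psi-\phi)\kappa^{y,z,\psi,\phi}_t\,\mu(du)$, and applying it to $(\phi,\psi)$ gives an upper bound for $f(t,y,z,\phi)-f(t,y,z,\psi)$, i.e.\ a lower bound for $f(t,y,z,\psi)-f(t,y,z,\phi)$ by $\int_\cU(\psi-\phi)\kappa^{y,z,\phi,\psi}_t\,\mu(du)$. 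Consequently $|f(t,y,z,\psi)-f(t,y,z,\phi)|$ is dominated by the larger of the two quantities $\left|\int_\cU(\psi-\phi)\kappa\,\mu(du)\right|$ with $\kappa\in\{\kappa^{y,z,\psi,\phi}_t,\kappa^{y,z,\phi,\psi}_t\}$, both satisfying $|\kappa|\le\ell$ with $\ell\in\bL^\infty_\mu\cap\bL^2_\mu$.

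The core of the argument is then the functional estimate: for any predictable $\kappa$ with $|\kappa|\le\ell$,
$$\left|\int_\cU \chi(u)\kappa(u)\,\mu(du)\right| \le \big(\|\ell\|_{\bL^\infty_\mu}\vee\|\ell\|_{\bL^2_\mu}\big)\,\|\chi\|_{\bL^1_\mu+\bL^2_\mu}, \qquad \chi\in\bL^1_\mu+\bL^2_\mu.$$
I would prove this by taking any splitting $\chi=\chi^1+\chi^2$ with $\chi^1\in\bL^1_\mu$, $\chi^2\in\bL^2_\mu$, estimating $\left|\int_\cU\chi^1\kappa\,\mu(du)\right|\le\|\ell\|_{\bL^\infty_\mu}\|\chi^1\|_{\bL^1_\mu}$ and, via Cauchy--Schwarz, $\left|\int_\cU\chi^2\kappa\,\mu(du)\right|\le\|\ell\|_{\bL^2_\mu}\|\chi^2\|_{\bL^2_\mu}$, adding them, and then taking the infimum over all decompositions (the left-hand side being independent of the splitting). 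Taking $\chi=\psi-\phi$ yields $|f(t,y,z,\psi)-f(t,y,z,\phi)|\le(\|\ell\|_{\bL^\infty_\mu}\vee\|\ell\|_{\bL^2_\mu})\|\psi-\phi\|_{\bL^1_\mu+\bL^2_\mu}$, and combining with the $z$-Lipschitz bound through the triangle inequality above delivers (H3) with constant $\max\big(K,\|\ell\|_{\bL^\infty_\mu}\vee\|\ell\|_{\bL^2_\mu}\big)$.

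I do not anticipate a genuine obstacle; the only step needing care is the functional estimate, which is essentially the embedding of $\bL^\infty_\mu\cap\bL^2_\mu$ into the dual of $\bL^1_\mu+\bL^2_\mu$. The two features that make it work are that $\ell$ lies in \emph{both} $\bL^\infty_\mu$ and $\bL^2_\mu$, so that it pairs with each piece of the decomposition, and, crucially, that $\ell$ (hence the resulting constant) is independent of $(\omega,t,y,z)$, which is precisely what guarantees that the Lipschitz constant in (H3) is uniform.
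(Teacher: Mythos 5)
Your proof is correct and takes essentially the same approach as the paper: the heart of both arguments is the pairing estimate $\left|\int_\cU \chi(u)\kappa(u)\,\mu(du)\right| \leq \left(\|\ell\|_{\bL^\infty_\mu}\vee\|\ell\|_{\bL^2_\mu}\right)\|\chi\|_{\bL^1_\mu+\bL^2_\mu}$ for $|\kappa|\leq\ell$, i.e.\ the fact that $\bL^\infty_\mu\cap\bL^2_\mu$ acts as (a subspace of) the dual of $\bL^1_\mu+\bL^2_\mu$. The only differences are presentational: the paper cites this duality from Krein--Petunin--Semenov, whereas you prove the one inequality actually needed by splitting $\chi$ and taking the infimum over decompositions, and you also spell out the symmetrization of the one-sided bound in (H3'), which the paper leaves implicit.
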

\noindent \begin{proof}
Indeed for $p<2$, we have to take $\psi$ and $\phi$ in $\bL^1_\mu + \bL^2_\mu$. Thus if $\ell$ belongs to $\bL^\infty_\mu \cap \bL^2_\mu$, the dual space of $\bL^1_\mu + \bL^2_\mu$ (see \cite{krei:petu:seme:82}, Chapter 3, Theorem 3.1), then for $\psi$ and $\phi$ in $\bL^1_\mu + \bL^2_\mu$, we obtain:
$$|f(t,y,z,\psi) - f(t,y,z,\phi) | \leq \|\ell\|_{\bL^\infty_\mu \cap \bL^2_\mu}  \|\psi-\phi\|_{\bL^1_\mu + \bL^2_\mu}.$$
\end{proof}

Then under $(\mathbf{ H_{comp}})$, the proof of Proposition 4 in \cite{krus:popi:14} remains exactly the same.

\subsection{Random terminal time}

Now we assume that $\tau$ is a stopping time for the filtration $\mathbb F$, which need not be bounded (as in Section 6 of \cite{krus:popi:14}). We want to solve the following BSDE: $\P$-a.s., for all $0\leq t \leq T$,
\begin{eqnarray} \nonumber
Y_{t\wedge \tau} & = & Y_{T\wedge \tau} + \int_{t\wedge \tau}^{T\wedge \tau} f(s,Y_s, Z_s,\psi_s) ds -\int_{t\wedge \tau}^{T\wedge \tau} Z_sdW_s \\ \label{eq:gene_BSDE_rand_time} 
& -& \int_{t\wedge \tau}^{T\wedge \tau} \int_\cU \psi_s(u) \tpi(du,ds) - \int_{t\wedge \tau}^{T\wedge \tau} dM_s
\end{eqnarray}
with the condition that $\P$-a.s. on the set $\{t\geq \tau \}$, $Y_t =\xi$ and $Z_t =\psi_t=M_t=0$. Note that this equation was denoted (36) in \cite{krus:popi:14}.

On the generator, Assumptions  $\mathbf{ (H_{ex})}$ still hold with a monotonicity constant $\al$ and a Lipschitz constant $K$, but the growth condition (H2) is replaced by:
\begin{equation}\tag{H2''}
\forall r > 0,\ \forall n \in \N,\quad \sup_{|y|\leq r} (|f(t,y,0,0)-f(t,0,0,0)|) \in L^1(\Omega \times (0,n)).
\end{equation}
and the condition \eqref{eq:int_cond} is replaced by 
\begin{equation} \label{eq:int_cond_random_time}
\E \left[ e^{p\rho \tau} |\xi|^p + \int_0^\tau e^{p\rho t} |f(t,0,0,0)|^p dt \right] < +\infty
\end{equation}
(denoted (H5') in \cite{krus:popi:14}, Section 6). We suppose that the constant $\rho$ in \eqref{eq:int_cond_random_time} satisfies
$$\rho > \nu = \alpha + \frac{K^2}{p-1} + \frac{K^2}{2\eps},$$
where the constant $0 < \eps < \frac{p-1}{2}$ is given by Lemma \ref{lem:tech_inequality} and depends only on $K$ and $p$ (see \eqref{eq:estim_eps} for a value of $\eps$). As in \cite{krus:popi:14} we suppose that Condition (H6) holds, that is $\xi$ is $\F_\tau$-measurable and 
\begin{equation*}
\E\left[ \int_0^\tau e^{p\rho t}|f(t, \xi_t ,\eta_t, \gamma_t)|^p dt \right] < +\infty,
\end{equation*}
where $ \xi_t = \E ( \xi|\F_t) $ and $( \eta, \gamma, N)$ are given by the martingale representation:
$$ \xi = \E ( \xi)  + \int_0^\infty  \eta_s dW_s + \int_0^\infty \int_\cU  \gamma_s(u) \tpi(du,ds) +  N_\tau$$
with
$$\E \left[ \left( \int_0^\infty | \eta_s|^2 ds +  \int_0^\infty \int_\cU | \gamma_s(u)|^2 \pi(du,ds) +[N]_\tau \right)^{p/2} \right] < +\infty.$$

\vspace{0.5cm}
\noindent \textbf{ Proposition 5} 
\textit{
Under conditions \textrm{(H1), (H2''), (H3), \eqref{eq:int_cond_random_time} and (H6)}, the BSDE \eqref{eq:gene_BSDE_rand_time} has at most one solution satisfying}
\begin{eqnarray} \nonumber
&&\E \left[ e^{p\rho (t\wedge \tau)} |Y_{t\wedge \tau}|^p + \int_{0}^{T\wedge \tau}  e^{p\rho s} |Y_{s}|^{p}  ds +  \int_{0}^{T\wedge \tau}  e^{p\rho s} |Y_{s}|^{p-2} |Z_s|^2 \1_{Y_s\neq 0} ds \right] \\ \nonumber
&& + \E \left[\int_{0}^{T\wedge \tau} e^{p\rho s}  |Y_{s-}|^{p-2}  \1_{Y_{s-}\neq 0} d[ M]^c_s \right] \\  \nonumber
&& + \E \left[   \int_{t\wedge \tau}^{T\wedge \tau}\int_\cU e^{p\rho s}  \left( |Y_{s-}|^2 \vee  |Y_{s-} + \psi_s(u)|^2 \right)^{p/2-1} \1_{|Y_{s-}| \vee  |Y_{s-} + \psi_s(u)| \neq 0}| \psi_s(u)|^2\pi(du,ds)\right] \\ \label{eq:rnd_term_time_apriori_estim}
&& + \E\left[  \sum_{0 < s \leq T\wedge  \tau}e^{p\rho s} |\Delta M_s|^2  \left( |Y_{s-}|^2 \vee  |Y_{s-} + \Delta M_s|^2 \right)^{p/2-1} \1_{|Y_{s-}| \vee  |Y_{s-} + \Delta M_s| \neq 0} \right]  
 < +\infty. 
\end{eqnarray}
\begin{proof}
From the assumption on $f$, Young's inequality and Lemma \ref{lem:tech_inequality}, we choose $\eps > 0$ and $\delta = \vartheta(\eps,p)|\what y|$ such that 
\begin{eqnarray*}
&& p|\what y|^{p-1} \check{\what y} (f(s,y,z,\psi)-f(s,y',z',\psi') -\rho|\what y|^p \leq p \left(\al  +\frac{K^2}{p-1} +\frac{K^2}{2\eps} - \rho \right) |\what y|^p \\
&& \qquad + \frac{c(p)}{2}  |\what y|^{p-2} \1_{\what y\neq 0}  |\what z|^2   +\frac{p\eps}{2}  |\what y|^{p-2} \1_{\what y\neq 0} \|\what \psi \1_{|\what \psi | < \delta} \|^2_{\bL^2_\mu} + pK |\what y|^{p-1}  \|\what \psi \1_{|\what \psi | \geq \delta} \|_{\bL^1_\mu}  \\
&& \quad \leq  \frac{c(p)}{2}  |\what y|^{p-2} \1_{\what y\neq 0}  |\what z|^2   +\frac{p\eps}{2}  |\what y|^{p-2} \1_{\what y\neq 0} \|\what \psi \1_{|\what \psi | < \delta} \|^2_{\bL^2_\mu} + pK |\what y|^{p-1}  \|\what \psi \1_{|\what \psi | \geq \delta} \|_{\bL^1_\mu} \\
&& \quad   \leq  \frac{c(p)}{2}  |\what y|^{p-2} \1_{\what y\neq 0}  |\what z|^2   + \frac{1}{2} \Gamma(\what y,\what \psi,K,\eps,p) 
\end{eqnarray*}
where $\Gamma$ is defined by \eqref{eq:def_Gamma}. Then It\^o's formula and the previous inequality give for $0 \leq t \leq T$
\begin{eqnarray} \nonumber
&&e^{p\rho (t\wedge \tau)} |\what Y_{t\wedge \tau}|^p 
 \leq e^{p\rho (T\wedge \tau)}|\what Y_{T\wedge \tau}|^p -  p \int_{t\wedge \tau}^{T\wedge \tau} e^{p\rho s} |\what Y_s|^{p-1} \check{\what Y}_s \what Z_s dW_s   \\ \nonumber
& &\quad -  p  \int_{t\wedge \tau}^{T\wedge \tau} e^{p\rho s}|\what Y_{s-}|^{p-1} \check{\what Y}_{s-} d\what M_s -  p \int_{t\wedge \tau}^{T\wedge \tau}e^{p\rho s} |\what Y_{s-}|^{p-1} \check{\what Y}_{s-}  \int_\cU \what \psi_s(u) \tpi(du,ds) \\ \nonumber
&& \quad - \frac{1}{2} \int_{t\wedge \tau}^{T\wedge \tau} e^{p \rho s} \int_\cU \left[ |Y_{s-}+\psi_s(u)|^p -|Y_{s-}|^p - p|Y_{s-}|^{p-1}  \check{Y}_{s-} \psi_s(u) \right] \tpi(du,ds) \\ \label{eq:Ito_p_tau_estimate}
&& \quad + \frac{1}{2}  \int_{t\wedge \tau}^{T\wedge \tau} e^{p \rho s} \left[ \Gamma(\what Y_{s-},\what \psi_s(u),K,\eps,p) -\Psi(\what Y_{s-},\what \psi_s(u),p)\right]\mu(du)ds 
\end{eqnarray}
$\Psi$ being defined by \eqref{eq:def_Psi}. From Lemma \ref{lem:tech_inequality} the last term is non positive. From the integrability conditions on the solution taking the expectation in \eqref{eq:Ito_p_tau_estimate} leads to 
$$\E \left( e^{p\rho (t\wedge \tau)} |\what Y_{t\wedge \tau}|^p \right) \leq \E \left( e^{p\rho (T\wedge \tau)}|\what Y_{T\wedge \tau}|^p \right) .$$
If we replace $\rho$ by $\rho'$ with $\al  +\frac{K^2}{p-1} < \rho'<\rho$ we obtain the same result, and thus we get for any $0\leq t \leq T $
$$\E \left( e^{p\rho' (t\wedge \tau)} |\what Y_{t\wedge \tau}|^p \right) \leq e^{p(\rho'-\rho)T} \E \left( e^{p\rho (T\wedge \tau)}|\what Y_{T\wedge \tau}|^p \right) .$$
We let $T$ go to infinity to obtain $\what Y_t = 0$. 
Therefore $(Y,Z,\psi,M)$ and $(Y',Z',\psi',M')$ satisty BSDE \eqref{eq:gene_BSDE_rand_time} and $Y=Y'$. Thus we have the same martingale parts and by orthogonality, $\what Z=\what \psi =\what M=0$. Uniqueness of the solution is proved. 

\end{proof}

\vspace{0.5cm}
\noindent \textbf{ Proposition 6} 
\textit{
Under conditions \textrm{(H1), (H2''), (H3), \eqref{eq:int_cond_random_time} and (H6)}, the BSDE \eqref{eq:gene_BSDE_rand_time} has a solution satisfying \eqref{eq:rnd_term_time_apriori_estim}, the right-hand side of the inequality is given by 
$$C \E\left[  e^{p\rho \tau} |\xi|^p  +  \int_{0}^{\tau} e^{p\rho s} |f(s,0,0,0)|^p ds \right].$$
Moreover }
\begin{eqnarray} \nonumber
&& \E \left( \int_{0}^{\tau} e^{2\rho s} |Z_s|^2 ds \right)^{p/2}+ \E\left(  \int_{0}^{ \tau} e^{2\rho s}\int_\cU |\psi_s(u)|^2 \pi(du,ds)  \right)^{p/2}+ \E \left( \int_{0}^{ \tau} e^{2\rho s} d[M]_s\right)^{p/2}\\ \label{eq:rnd_term_time_apriori_estim_3}
&& \qquad  \leq C \E\left[  e^{p\rho \tau} |\xi|^p  +  \int_{0}^{\tau} e^{p\rho s} |f(s,0,0,0)|^p ds \right]. 
\end{eqnarray}
\textit{The constant $C$ depends only on $p$, $K$ and $\alpha$. }\\*
\begin{proof}
For each $n\in \N$ we construct a solution $\{(Y^n,Z^n,\psi^n,M^n), \ t\geq 0\}$, first on the interval $[0,n]$ using Theorem 2:
\begin{eqnarray} \nonumber
Y^n_{t} & = & \E (\xi | \F_n)  + \int_{t}^{n} \1_{[0,\tau]}(s) f(s,Y^n_s, Z^n_s,\psi^n_s) ds -\int_{t}^{n} Z^n_sdW_s - \int_{t}^{n} \int_\cU \psi^n_s(u) \tpi(du,ds) - \int_{t}^{n} dM^n_s.
\end{eqnarray}
And for $t \geq n$ (Assumption (H6)):
$$Y^n_t = \xi_t ,\quad Z^n_t = \eta_t, \quad \psi^n_t(u) = \gamma_t(u), \quad M^n_t = N_t.$$

\begin{itemize}
\item \textbf{ Step 1:} \textit{a priori estimate.} 
\end{itemize}
Again with Young's inequality and for some $\delta > 0$ sufficiently small and any $\eta > 0$
\begin{eqnarray} \nonumber
&& |y|^{p-1}\check{y} f(t,y,z,\psi) \leq \left( \al +\delta +\frac{K^2}{((p-1)-2\delta)} +\frac{K^2}{\eps}\right) |y|^p \\ \nonumber
&& \qquad + \left( \frac{(p-1)}{2} -\delta \right) |y|^{p-2} \1_{y\neq 0}  |z|^2  + \frac{1}{p} |f(t,0,0,0)|^p \left( \frac{p\delta }{p-1}\right)^{1-p} \\ \label{eq:estim_12_10_2014_2}
&& \qquad + \eps |y|^{p-2} \1_{y\neq 0} \|\psi \1_{|\psi|\leq \eta} \|^2_{ \bL^2_\mu} +  K |y|^{p-1}  \|\psi \1_{|\psi|\geq \eta} \|_{ \bL^1_\mu} .
\end{eqnarray}
We choose $\delta > 0$ such that $ \al +2\delta +\frac{K^2}{(p-1-2\delta)} +\frac{K^2}{\eps}\leq \rho$. As in \cite{krus:popi:14}, It\^o's formula for $0 \leq t \leq T \leq n$ and arguments used in the proof of Propositions 3 or 5 give:
\begin{eqnarray} \nonumber
&&\E \left[ e^{p\rho (t\wedge \tau)} |Y^n_{t\wedge \tau}|^p +p\delta   \int_{0}^{T\wedge \tau}  e^{p\rho s} |Y^n_{s}|^{p}  ds\right] \\ \nonumber
&& +p\delta \E \left[  \int_{0}^{T\wedge \tau}  e^{p\rho s} |Y^n_{s}|^{p-2} |Z^n_s|^2 \1_{Y^n_s\neq 0} ds \right]  +c(p)\E  \int_{0}^{T\wedge \tau} e^{p\rho s}  |Y^n_{s-}|^{p-2}  \1_{Y^n_{s-}\neq 0} d[ M^n ]^c_s\\ \nonumber
&& + c(p) \E\left[  \sum_{0 < s \leq T\wedge  \tau}e^{p\rho s} |\Delta M^n_s|^2  \left( |Y^n_{s-}|^2 \vee  |Y^n_{s-} + \Delta M^n_s|^2 \right)^{p/2-1} \1_{|Y^n_{s-}| \vee  |Y^n_{s-} + \Delta M^n_s| \neq 0} \right] \\ \nonumber
&& + \frac{c(p)}{2} \E \int_{0}^{T\wedge \tau}\int_\cU e^{p\rho s}|\psi^n_s(u)|^2  \left( |Y^n_{s-}|^2 \vee  |Y^n_{s-} + \psi^n_s(u)|^2 \right)^{p/2-1} \1_{|Y^n_{s-}| \vee  |Y^n_{s-} + \psi^n_s(u)|\neq 0}\pi(du,ds) \\ 
\label{eq:estim_12_10_2014_3}
&& \leq \E\left[  e^{p\rho(T\wedge \tau)} |Y^n_{T\wedge \tau}|^p  + \left( \frac{p\delta }{p-1}\right)^{1-p}  \int_{0}^{T\wedge \tau} e^{p\rho s} |f(s,0,0,0)|^p ds \right].
\end{eqnarray}

\begin{itemize}
\item \noindent \textbf{Step 2:} \textit{the sequence $(Y^n)$ converges. }
\end{itemize}
Take $m > n$ and define
$$\what Y_t = Y^m_t - Y^n_t, \quad \what Z_t = Z^m_t - Z^n_t, \quad \what \psi_t = \psi^m_t - \psi^n_t, \quad \what M_t = M^m_t - M^n_t.$$
The argument already used to control the generator (see \eqref{eq:estim_12_10_2014_2}) and suitable
modifications (as in the proof of Proposition 3 again) imply that Inequality (43) for $n \leq t \leq m$ in \cite{krus:popi:14} becomes now:
\begin{eqnarray}  \nonumber
&&\E \left[ \sup_{t\geq n} e^{p\rho (t\wedge \tau)} |\what Y_{t\wedge \tau}|^p + \int_{n\wedge \tau}^{m\wedge \tau} e^{p\rho s} |\what Y_s|^p ds \right] 
 \leq  C\E  \int_{n\wedge \tau}^{\tau} e^{p\rho s}|f(s,\xi_s,\eta_s,\gamma_s)|^p ds .
\end{eqnarray}
From the same argument as in the proof of Proposition 5 for $t\leq n$
\begin{eqnarray*}
\E \left(  e^{p\rho (t\wedge \tau)} |\what Y_{t\wedge \tau}|^p \right) +\E  \int_0^{\tau} e^{p\rho s} |\what Y_s|^p  ds &  \leq & \E e^{p\rho (n\wedge \tau)}|\what Y_{n}|^p \leq   C\E  \int_{n\wedge \tau}^{\tau} e^{p\rho s}|f(s,\xi_s,\eta_s,\gamma_s)|^p ds .
\end{eqnarray*}
The convergence of the sequence $Y^n$ follows.

\begin{itemize}
\item \noindent \textbf{Step 3:} \textit{convergence of the martingale part $(Z^n ,\psi^n,M^n)$.} 
\end{itemize}
For the convergence of $(Z^n,M^n)$ the arguments are the same. But for $\psi^n$, we control only 
$$\E \left[ \left( \int_0^{\tau} \int_\cU e^{p\rho s} |\psi^m_s(u)-\psi^n_s(u)|^2  \pi(du,ds) \right)^{p/2} \right].$$
Following the same sketch as in the proof of uniqueness we deduce  
\begin{eqnarray*}  \nonumber
&&\E \left[  \int_{0}^{m\wedge \tau} e^{p\rho s}  |\what Y_{s-}|^{p-2}  \1_{\what Y_{s-}\neq 0} d[ \what M ]^c_s+  \int_{0}^{m\wedge \tau}  e^{p\rho s} |\what Y_{s}|^{p-2} |\what Z_s|^2 \1_{\what Y_s\neq 0} ds\right] \\  \nonumber
&& + \E \left[ \int_{0}^{ m \wedge \tau}\int_\cU e^{p\rho s}|\what \psi_s(u)|^2  \left( |\what Y_{s-}|^2 \vee  |\what Y_{s^-} + \what \psi_s(u)|^2 \right)^{p/2-1} \1_{ |\what Y_{s-}| \vee  |\what Y_{s-} + \what \psi_s(u)| \neq 0}\pi(du,ds) \right] \\ 
&& + \E \left[  \sum_{0 < s \leq m\wedge \tau}e^{p\rho s} |\Delta \what M_s|^2  \left( |\what Y_{s-}|^2 \vee  |\what Y_{s-} + \Delta \what M_s|^2 \right)^{p/2-1} \1_{|\what Y_{s-}| \vee  |\what Y_{s-} + \Delta \what M_s| \neq 0} \right] \\ 
&&\qquad  \leq  C\E  \int_{n\wedge \tau}^{\tau} e^{p\rho s}|f(s,\xi_s,\eta_s,\gamma_s)|^p ds .
\end{eqnarray*}
Then we can use again the argument \eqref{eq:trick_control_mart_part} in order to have a Cauchy sequence for the norm:
$$\E \left( \int_{0}^{\tau} e^{2\rho s} |\what Z_s|^2 ds \right)^{p/2}+ \E\left(  \int_{0}^{ \tau} e^{2\rho s}\int_\cU |\what \psi_s(u)|^2 \pi(du,ds)  \right)^{p/2}+ \E \left( \int_{0}^{ \tau} e^{2\rho s} d[\what  M]_s\right)^{p/2}.$$
Hence it converges to $(Z,\psi,M)$ and from the two previous steps the limit $(Y,Z,\psi,M)$ is a solution of the BSDE \eqref{eq:gene_BSDE_rand_time} which satisfies \eqref{eq:rnd_term_time_apriori_estim} and \eqref{eq:rnd_term_time_apriori_estim_3}.

\end{proof}

From the two previous propositions we deduce the following existence and uniqueness result.

\vspace{0.5cm}
\noindent \textbf{ Theorem 3} 
\textit{
Under conditions (H1), (H2''), (H3), \eqref{eq:int_cond_random_time} and (H6), the BSDE \eqref{eq:gene_BSDE_rand_time} has a unique solution satisfying \eqref{eq:rnd_term_time_apriori_estim} and \eqref{eq:rnd_term_time_apriori_estim_3}.}\\*

\section{Technical results}

To prove our results in the previous section we used technical Lemmas \ref{lem:sum_Lp_spaces} and \ref{lem:tech_inequality}. Here we give the proof of these results

\noindent \begin{proof} \textbf{ (of Lemma \ref{lem:sum_Lp_spaces}).}
If $\phi^1=\phi \1_{|\phi| \leq \delta} \in \bL^2_\mu$ and $\phi^2=\phi \1_{|\phi| > \delta} \in \bL^p_\mu$, then $\phi = \phi^1+\phi^2$ and the result is trivial. Conversely if $\|\phi \|_{\bL^2_\mu+\bL^p_\mu} < +\infty$, then for any $\eps > 0$, there exists $\phi^1 \in \bL^2_\mu$ and $\phi^2  \in \bL^p_\mu$ with $\phi^1+\phi^2 = \phi$ and
$$\|\phi^1\|_{\bL^2_\mu} +  \|\phi^2\|_{\bL^p_\mu} \leq \|\phi \|_{\bL^2_\mu+\bL^p_\mu} + \eps.$$ 
Now for all $\delta>0$ it holds that
$$|\phi| \1_{|\phi| > \delta}  \leq |\phi^2| + |\phi^1| \1_{|\phi^1| \geq \delta/2} +  |\phi^1| \1_{|\phi^1| < \delta/2}\1_{|\phi| > \delta} .$$
We already know that $\phi^2 \in \bL^p_\mu$ and that $\phi^1\in \bL^2_\mu$. Since $p<2$ it follows that the second term is in $\bL^p_\mu$:
$$ |\phi^1|^p \1_{|\phi^1| \geq \delta/2} \leq \left( \frac{\delta}{2} \right)^{p-2} |\phi^1|^2 \1_{|\phi^1| \geq \delta/2} \leq \left( \frac{\delta}{2} \right)^{p-2} |\phi^1|^2.$$
For the third one, observe that if $|\phi^1| < \delta/2$ and $|\phi^1+\phi^2|=|\phi| > \delta$, then $|\phi^2| \geq \delta/2$. Thus 
$$|\phi^1| \1_{|\phi^1| < \delta/2}\1_{|\phi| > \delta} \leq |\phi^2| \1_{|\phi^1| < \delta/2}\1_{|\phi| > \delta} \leq |\phi^2|.$$
Thus $|\phi| \1_{|\phi| > \delta} \in \bL^p_\mu$. 

Let us now turn to $|\phi| \1_{|\phi| \leq \delta} $. We decompose this term as follows
$$|\phi| \1_{|\phi| \leq \delta}  \leq |\phi^1| + |\phi^2| \1_{|\phi^2| \leq 2\delta} +  |\phi^2| \1_{|\phi^2| > 2\delta}\1_{|\phi| \leq  \delta} .$$
Again we already know that $\phi^2 \in \bL^p_\mu$ and that $\phi^1\in \bL^2_\mu$. Thus the second term is in $\bL^2_\mu$, since for $p<2$:
$$ |\phi^2|^2 \1_{|\phi^2| \leq 2\delta} \leq (2\delta)^{2-p} |\phi^2|^p \1_{|\phi^2| \leq 2\delta}\leq (2\delta)^{2-p}  |\phi^2|^p.$$
For the third one, observe that if $|\phi^2| > 2\delta$ and $|\phi^1+\phi^2| \leq \delta$, then $|\phi^1|> \delta$ and
$$|\phi^2|\le |\phi^2+\phi^1|+|\phi^1|\le \delta+|\phi^1|\le 2|\phi^1|.$$
Thus $|\phi| \1_{|\phi| \leq \delta} \in \bL^2_\mu$.

Finally, if $|\phi| \1_{|\phi| > \delta} \in \bL^p_\mu$, we also have $|\phi| \1_{|\phi| > \delta} \in \bL^1_\mu$, and the conclusion follows.
\end{proof}

\vspace{1cm}
Recall that for $p\in (1,2)$, $K\geq 0$, $\eps > 0$ and $(a,b) \in (\R^d)^2$, we have defined $\Psi$ by \eqref{eq:def_Psi} and $\Gamma$ by \eqref{eq:def_Gamma} as follows:
\begin{eqnarray*}
\Psi(a,b,p) & = & |a+b|^p - |a|^p - p |a|^{p-1} \langle \check a, b \rangle =|a+b|^p - |a|^p - p |a|^{p-2} \langle a, b \rangle \1_{a \neq 0} \\
\Gamma(a,b,K,\eps,p) & = & 2Kp|a|^{p-1} |b| \1_{|b| \geq \vartheta(\eps,p)|a|} + p\eps |a|^{p-2}|b|^2 \1_{|b| < \vartheta(\eps,p)|a|}
\end{eqnarray*}
where
$$\vartheta(\eps,p) = \sqrt{\frac{1}{2}\left(\frac{p-1}{2\eps} \right)^{\frac{2}{2-p}} + \frac{1}{2}} -1.$$
\begin{Lemma} \label{lem:tech_inequality}
Let $K \geq 0$ and let $p\in (1,2)$. Then there exists $0 < \eps < \frac{p-1}{2}$ such that 
$$\forall (a,b) \in (\R^d)^2, \qquad \Psi(a,b,p) \geq \Gamma(a,b,K,\eps,p).$$
Let us emphasize that $\eps$ depends on $K$ and $p$.
\end{Lemma}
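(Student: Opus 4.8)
The plan is to strip away the dimension and the size of $a$ by scaling, and then to feed everything into the pointwise convexity estimate for $\Psi$ that is already available as Lemma 9 in \cite{krus:popi:14}. First I would note that both $\Psi(\cdot,\cdot,p)$ and $\Gamma(\cdot,\cdot,K,\eps,p)$ are positively homogeneous of degree $p$ in the pair $(a,b)$, and that the indicators in $\Gamma$ are invariant under the common positive scaling $(a,b)\mapsto(\lambda a,\lambda b)$. When $a=0$ the inequality is trivial, since then $\Gamma(0,b,K,\eps,p)=0$ (the prefactor $|a|^{p-1}$ vanishes because $p>1$, and the second term carries the always-false indicator $\1_{|b|<0}$), while $\Psi(0,b,p)=|b|^p\ge 0$. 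Hence it suffices to treat $|a|=1$. Setting $t=|b|$, the claim reduces to the two scalar inequalities $\Psi\ge 2Kp\,t$ on $\{t\ge\vartheta(\eps,p)\}$ and $\Psi\ge p\eps\,t^2$ on $\{t<\vartheta(\eps,p)\}$, since $\Gamma$ collapses to $2Kp\,t$ and to $p\eps\,t^2$ on these two sets respectively.

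The engine is the bound $\Psi(a,b,p)\ge c(p)\,|b|^2\,(|a|\vee|a+b|)^{p-2}$ with $c(p)=p(p-1)/2$ (Lemma 9 in \cite{krus:popi:14}). Since $|a+b|\le 1+t$ and $p-2<0$, with $|a|=1$ this yields the direction-free lower bound $\Psi\ge c(p)\,t^2(1+t)^{p-2}=c(p)\,t\,\phi(t)$, where $\phi(t)=t(1+t)^{p-2}$. A one-line computation gives $\phi'(t)=(1+(p-1)t)(1+t)^{p-3}>0$, so $\phi$ increases strictly from $0$ to $+\infty$ on $[0,\infty)$; let $t_*$ denote the unique solution of $\phi(t_*)=4K/(p-1)$. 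I would then fix $\eps\in(0,(p-1)/2)$ small enough that $\vartheta(\eps,p)\ge t_*$, which is possible because $\vartheta(\eps,p)\to+\infty$ as $\eps\downarrow 0$. Writing $A=\big(\tfrac{p-1}{2\eps}\big)^{1/(2-p)}$, the constraint $\eps<(p-1)/2$ means exactly $A>1$, and then $1+\vartheta(\eps,p)=\sqrt{(A^2+1)/2}\le A$.

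With $\eps$ so chosen the two regimes follow by direct substitution. For $t\ge\vartheta(\eps,p)\ge t_*$, monotonicity of $\phi$ gives $\phi(t)\ge 4K/(p-1)$, hence $\Psi\ge c(p)\,t\cdot 4K/(p-1)=2Kp\,t$, the outer bound. For $t<\vartheta(\eps,p)$ we have $1+t\le 1+\vartheta(\eps,p)\le A$, so $(1+t)^{p-2}\ge A^{p-2}=2\eps/(p-1)$ and therefore $\Psi\ge c(p)\,t^2\cdot 2\eps/(p-1)=p\eps\,t^2$, the inner bound. The hard part is not any single estimate but the simultaneous calibration of $\eps$: one value must make $\vartheta$ large enough for the linear bound to hold on the outer region while staying below $(p-1)/2$ so that $A>1$ keeps the whole inner region $\{t<\vartheta(\eps,p)\}$ inside $\{1+t\le A\}$, where the quadratic bound survives. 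The algebraic equivalence $\sqrt{(A^2+1)/2}\le A\iff A\ge 1$ is exactly what ties the definition of $\vartheta$ to this calibration, and the explicit value quoted in \eqref{eq:estim_eps} is merely one admissible choice realizing $\vartheta(\eps,p)\ge t_*$.
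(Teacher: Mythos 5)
Your proof is correct, but it takes a genuinely different route from the paper's. The paper proves the inequality from scratch: after the same reduction to $a\neq 0$, it decomposes $b=ta+c$ with $c\perp a$, reduces to the two scalar variables $(t,\tau^2)$ with $\tau=|c|/|a|$, and then runs a delicate calculus argument in the inner regime (sign analysis of $\sigma''$ through the auxiliary function $g$, location of its roots $\Xi$ and $\Upsilon$, explicit computation of the interior minimum via the function $\varpi$) plus a separate asymptotic function $h$ for the outer regime $t^2+\tau^2\geq\vartheta(\eps,p)^2$. You instead invoke the convexity estimate $\Psi(a,b,p)\geq c(p)\,|b|^2\left(|a|\vee|a+b|\right)^{p-2}$ from Lemma 9 of \cite{krus:popi:14} as a black box — legitimate and non-circular, since that lemma is proved independently and this paper already uses it verbatim in the proof of Proposition 3 — deliberately discard the directional information via $|a+b|\leq|a|+|b|$ after normalizing $|a|=1$ (harmless, since $\Gamma$ sees $b$ only through $|b|$), and reduce both regimes to the strict monotonicity of the single function $\phi(t)=t(1+t)^{p-2}$. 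I checked the computations: $\phi'(t)=(1+(p-1)t)(1+t)^{p-3}>0$ with $\phi(t)\to\infty$ since $p>1$; $c(p)A^{p-2}=p\eps$ exactly, with $A=\left(\frac{p-1}{2\eps}\right)^{1/(2-p)}$; the equivalence $1+\vartheta(\eps,p)=\sqrt{(A^2+1)/2}\leq A\iff A\geq 1\iff\eps\leq\frac{p-1}{2}$, which keeps the whole inner region inside the zone where the quadratic bound holds; and $\phi(t)\geq 4K/(p-1)$ gives $c(p)\,t\,\phi(t)\geq 2Kpt$ in the outer region, the calibration $\vartheta(\eps,p)\geq t_*$ being achievable since $\vartheta(\eps,p)\uparrow\infty$ as $\eps\downarrow 0$; the degenerate cases $a=0$ and $K=0$ are handled correctly. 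What your route buys is brevity and transparency — one monotone scalar function replaces a two-variable minimum analysis — at the cost of importing Lemma 9; what the paper's route buys is a self-contained proof whose sharper, loss-free treatment of the orthogonal component could in principle admit a larger $\eps$, which is immaterial here since the lemma only asserts existence. One small caveat: your closing aside that the explicit value in \eqref{eq:estim_eps} realizes $\vartheta(\eps,p)\geq t_*$ is unverified — that bound is calibrated to the paper's own threshold $\alpha(K,p)$ coming from $h$, not to your $t_*$ — but your argument needs only existence of $\eps$, so the aside can simply be dropped.
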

\begin{proof}
First observe that that for $a=0$ the inequality holds for all $\eps>0$ and $b\in \R^d$. Assume in the sequel that $a\neq  0$. For $t\in \R$, $\tau^2\in [0,\infty)$ and $\epsilon \in (0,\infty)$ let 
$$\psi(t,\tau^2,p)=\left( ((1+t)^2 + \tau^2 \right)^{p/2} - 1 - p t $$
and
$$\gamma(t,\tau^2,K,\epsilon,p)= 2Kp \left( t^2+\tau^2 \right)^{1/2} \1_{(|t|^2+\tau^2)^{1/2} \geq \vartheta(\eps,p)}
 +  p\eps (t^2+\tau^2)  \1_{(|t|^2+\tau^2)^{1/2}  < \vartheta(\eps,p)}.$$
For all $b \in \R^d$ there exist a unique $t \in \R$ and a unique $c\in \R^d$ with $\langle a, c \rangle= 0$ and $b=ta+c$. If we choose $t\in \R$ and $c\in \R^d$ in this way and let $\tau^2 = \frac{|c|^2}{|a|^2} \geq 0$, we obtain that
\begin{eqnarray*}
\Psi(a,b,p) & = & |a+b|^p - |a|^p - p |a|^{p-2} \langle a, b \rangle = (|a+b|^2)^{p/2} - |a|^p - p t |a|^{p} \\
& = & |a|^p \left( (|1+t|^2 + \frac{|c|^2}{|a|^2} \right)^{p/2} - |a|^p - p t |a|^{p} = |a|^p \psi(t,\tau^2, p),
\end{eqnarray*}
and
\begin{eqnarray*}
\Gamma(a,b,K,\eps,p) & = & |a|^p 2 Kp \left( t^2+\tau^2 \right)^{1/2} \1_{(|t|^2+\tau^2)^{1/2} \geq \vartheta(\eps,p)} + |a|^p  p\eps (t^2+\tau^2)  \1_{(|t|^2+\tau^2)^{1/2}  < \vartheta(\eps,p)} \\
 & = & |a|^p \gamma(t,\tau^2,K,\eps,p).
 \end{eqnarray*}
Hence the conclusion of the lemma holds if and only if there exists $\eps \in (0,\frac{p-1}{2})$ such that for all $t \in \R$ and $\tau^2 \geq 0$ it holds that $\psi(t,\tau^2,p)\geq  \gamma(t,\tau^2,K,\eps,p)$.

Let $h\colon (0,\infty) \to \R$ be the function satisfying for all $x\in (0,\infty)$ that
$$h(x) =\frac{1}{2^{p/2}}x^{p} -2^{p/2}- 1 - p(2K+1)x. $$
Since $p> 1$, the function $h$ tends to $+\infty$ when $x \to+ \infty$. Hence there exists a constant $\alpha(K,p) \geq 2$ such that for all $x \geq \alpha(K,p)$ it holds that $h(x) \geq 0$. Now, for the sequel of the proof, fix $\eps \in (0,\frac{p-1}{2})$ such that $\vartheta(\eps,K)\ge \alpha(K,p)$.

First case: Let $t\in \R$ and $\tau^2 \ge 0$ such that $(t^2+\tau^2)^{1/2}<\vartheta(\eps,p)$.  In particular it holds that 
\begin{equation}\label{eq:up_bound_tau_1}
\tau^2<\vartheta(\eps,p)^2=\left(\sqrt{\frac{1}{2}\left(\frac{p-1}{2\eps} \right)^{\frac{2}{2-p}} + \frac{1}{2}} -1\right)^2 <  \frac{1}{2} \left[  \left(\frac{p-1}{2\eps} \right)^{\frac{2}{2-p}} - 1 \right],
\end{equation}
and consequently that
\begin{equation}\label{eq:up_bound_tau}
 \tau^2  <    \left(\frac{p-1}{2\eps} \right)^{\frac{2}{2-p}} - 1=\left( \frac{1}{p\eps} \right)^{\frac{2}{2-p}} \wedge \left(\frac{p-1}{2\eps} \right)^{\frac{2}{2-p}} \wedge \left(\frac{p-1}{2\eps} \right)^{\frac{2}{2-p}} - 1 \wedge   \left(\frac{1}{2\eps} \right)^{\frac{2}{2-p}} - 1 
 \end{equation}
Moreover, it holds that
\begin{equation}
\eps < \frac{p-1}{2}=\frac{p-1}{2} \wedge \frac{p-1}{p} \wedge \frac{1}{2}.
\end{equation}
We have to show that $\psi(t,\tau^2,p)\ge p\eps (t^2+\tau^2)$.  To this end we consider the function $\sigma \colon \R \to \R$,
$$\sigma(s) = \psi(s,\tau^2,p) -  p\eps (s^2+\tau^2) =  \left( (1+s)^2 + \tau^2 \right)^{p/2} - 1 - p s  - p\eps (s^2+\tau^2)$$
for $s\in \R$.
The first and second derivatives of $\sigma$ are given by
\begin{eqnarray*}
\sigma^\prime(s) & = & p \left( (1+s)^2 + \tau^2 \right)^{p/2-1} (1+s) - p  - 2p\eps s,\\
\sigma''(s) & = & p \left( (1+s)^2 + \tau^2 \right)^{p/2-2} \left( (p-1)(1+s)^2+\tau^2 \right) - 2p\eps. 
\end{eqnarray*}
Observe that $\sigma^\prime(-1) = -p+2p\eps$ and 
$$\sigma(-1) = \tau^p - 1 + p - p\eps (1+\tau^2) = p-1-p\eps + \tau^p(1-p\eps \tau^{2-p}).$$
Since $\eps < \frac{p-1}{p} \wedge \frac{1}{2}$ and $\tau^2 \leq \left( \frac{1}{p\eps} \right)^{\frac{2}{2-p}}$, it holds that $\sigma(-1) > 0$ and $\sigma^\prime(-1) < 0$. 

Note that $\sigma''$ is well defined for all $s \neq -1$. Moreover since $p-1<1$, for any $s\neq -1$ it holds that
$$\sigma''(s) \geq  p(p-1) \left( (1+s)^2 + \tau^2 \right)^{p/2-1} - 2p\eps =: g(s).$$
Since $\tau^2 < \left(\frac{p-1}{2\eps} \right)^{\frac{2}{2-p}}$ we have $g(s)=0$ if and only if $(1+s)^2 = \left(\frac{p-1}{2\eps} \right)^{\frac{2}{2-p}} - \tau^2$.

Observe that
$$ \Xi(\tau^2,\eps,p)= -1 - \sqrt{\left(\frac{p-1}{2\eps} \right)^{\frac{2}{2-p}} - \tau^2} <-1.$$
is the only root of $g$ on $(-\infty,-1)$. Moreover, it holds that $g(-1) > 0$. Hence at least on the interval $(\Xi(\tau^2,\eps,p),-1)$, $\sigma'' > 0$. Thus $\sigma^\prime$ is increasing on $(\Xi(\tau^2,\eps,p),-1)$ with $\sigma^\prime(-1) < 0$; in other words $\sigma$ is decreasing on $(\Xi(\tau^2,\eps,p),-1)$ and $\sigma(-1) > 0$. Thereby $\sigma(s) > 0$ for all $s\in (\Xi(\tau^2,\eps,p),-1]$. 
Observe that $(t^2+\tau^2)^{1/2}<\vartheta(\epsilon,p)$ implies that 
$$t>-\vartheta(\epsilon,p)=-\sqrt{\frac{1}{2}\left(\frac{p-1}{2\eps} \right)^{\frac{2}{2-p}} + \frac{1}{2}} +1>-\sqrt{\frac{1}{2}\left(\frac{p-1}{2\eps} \right)^{\frac{2}{2-p}} + \frac{1}{2}} -1  >\Xi(\tau^2,\eps,p),$$
where we used \eqref{eq:up_bound_tau_1} for the last inequality. Thus we obtain that $\sigma(t)>0$ if $t\le -1$.

Next assume that $t > -1$. Observe that 
$$\Upsilon(\tau^2,\eps,p) =  \sqrt{\left(\frac{p-1}{2\eps} \right)^{\frac{2}{2-p}} - \tau^2} -1  > -1$$
is the only root of $g$ on $(-1,\infty)$. Since $\tau^2 < \left(\frac{p-1}{2\eps} \right)^{\frac{2}{2-p}} - 1$ (see \eqref{eq:up_bound_tau}), it holds that $\Upsilon(\tau^2,\eps,p) > 0$. And on the interval $(-1, \Upsilon(\tau^2,\eps,p))$, $\sigma'' > 0$, thus $\sigma'$ is increasing there with $\sigma'(-1) < 0$. Moreover, it holds that
$$\sigma'(0) = p \left( (1+\tau^2)^{p/2-1} - 1\right) \geq 0.$$ 
Hence, there exists a value $\delta = \delta(\tau^2,p,\eps)$ in $(-1,0]$ such that $\sigma'(\delta(\tau^2,p,\eps))=0$. And on the interval $(-1, \Upsilon(\tau^2,\eps,p))$, the function $\sigma$ has a unique minimum $m$ given by $m=\sigma(\delta(\tau^2,p,\eps))$. We want to prove that $m\geq 0$. By the very definition 
$$\sigma^\prime(\delta) = p \left( (1+\delta)^2 + \tau^2 \right)^{p/2-1} (1+\delta) - p  - 2p\eps \delta = 0$$
hence
$$\left( (1+\delta)^2 + \tau^2 \right)^{p/2} = \frac{1 + 2\eps \delta }{1+\delta} \left( (1+\delta)^2 + \tau^2 \right).$$
This gives that
\begin{eqnarray*}
m & = & \sigma(\delta) = \left( (1+\delta)^2 + \tau^2 \right)^{p/2} - 1 - p \delta  - p\eps (\delta^2+\tau^2) \\
& = & \frac{1 + 2\eps \delta }{1+\delta} \left( (1+\delta)^2 + \tau^2 \right)- 1 - p \delta  - p\eps (\delta^2+\tau^2) \\
& = & (2-p)\eps \delta^2 + (2\eps + 1-p)\delta + (2-p)\eps \tau^2 + \tau^2 \frac{1-2\eps}{1+\delta}.
\end{eqnarray*}
If $\varpi$ is the function defined on $(-1,0]$ by
$$\varpi(x) = (2-p)\eps x^2 + (2\eps + 1-p)x + (2-p)\eps \tau^2 + \tau^2 \frac{1-2\eps}{1+x},$$
this function $x \mapsto \varpi(x)$ has a positive second derivative and since $\eps < \frac{p-1}{2}$, the first derivative is negative on $(-1,0]$. Hence this is a decreasing function and we obtain that for any $x \in (-1,0]$, $\varpi(x) \geq \varpi(0)$. Now $\varpi(0) = \tau^2 (1-p\eps) \geq 0$. Thus $m = \sigma (\delta) = \varpi(\delta) \geq 0$. Consequently, $\sigma$ is nonnegative on $(-1, \Upsilon(\tau^2,\eps,p))$.
Finally, observe that 
$$ t<\vartheta(\epsilon,p)=\sqrt{\frac{1}{2}\left(\frac{p-1}{2\eps} \right)^{\frac{2}{2-p}} + \frac{1}{2}} -1< \Upsilon(\tau^2,\eps,p),$$
where we used \eqref{eq:up_bound_tau_1} for the last inequality. This implies that $\sigma(t)\ge 0$ also in the case $t>-1$.  

Second case: Let $t\in \R$ and $\tau^2 \ge 0$ such that $(t^2+\tau^2)^{1/2}\ge \vartheta(\epsilon,p)$.  
We have to show that $\psi(t,\tau^2,p)\ge 2 K p(t^2+\tau^2)^{1/2}$. First observe that
\begin{eqnarray} \nonumber
\psi(t,\tau^2,p) -  2Kp\left( t^2 + \tau^2 \right)^{1/2} & = & \left( (1+t)^2 + \tau^2 \right)^{p/2} - 1 - p t  - 2Kp \left( t ^2 + \tau^2 \right)^{1/2} \\ \label{eq:tech_ineq_1008_bis}
& \geq & \left( (1+t)^2 + \tau^2 \right)^{p/2} - 1 - p(2K+1)\left( t ^2 + \tau^2 \right)^{1/2}. 
\end{eqnarray}
Now for any $t\in \R$, $(1+t)^2 \geq (t^2/2)-2$, thus for $t^2+\tau^2 \geq 4$
\begin{equation}\label{eq:tech_ineq_1008}
\left( (1+t)^2 + \tau^2 \right)^{p/2}  \geq \left( \frac{t^2}{2}  + \tau^2 -2 \right)^{p/2} = \left( \frac{t^2+\tau^2}{2}  + \frac{\tau^2}{2} -2 \right)^{p/2}.
\end{equation}
We define the function $\varrho$ on $[2,+\infty)$ by
$$\varrho(x) =  \left( x  + \frac{\tau^2}{2} -2 \right)^{p/2} - x^{p/2}.$$
This function tends to zero when $x$ goes to infinity. If $\tau^2 \geq 4$, then immediately $\varrho(x) \geq 0$. If not, $\varrho$ is non decreasing and $\varrho(x) \geq \varrho(2) \geq -2^{p/2}$. In any case, for $x \geq 2$, $\varrho(x) + 2^{p/2} \geq 0$. With $x = (t^2+\tau^2)/2 \geq 2$, from \eqref{eq:tech_ineq_1008} we obtain that 
$$\left( (1+t)^2 + \tau^2 \right)^{p/2}  \geq \left( \frac{t^2+\tau^2}{2}  \right)^{p/2} - 2^{p/2}$$
and therefore from \eqref{eq:tech_ineq_1008_bis} if $t^2+\tau^2 \geq 4$
\begin{eqnarray*}
\psi(t,\tau^2,p) -  2Kp\left( t^2 + \tau^2 \right)^{1/2} 
& \geq & \frac{1}{2^{p/2}}\left( t^2 + \tau^2 \right)^{p/2} -2^{p/2}- 1 - p(2K+1)\left( t^2 + \tau^2 \right)^{1/2}\\
& =& h((t^2+\tau^2)^{1/2}).
\end{eqnarray*}
Since, we chose $\eps \in (0,\frac{p-1}{2})$ such that $\vartheta (\eps,K)\ge \alpha(K,p)$ and it holds that $h\ge 0$ on $(\alpha(K,p),\infty)$, it follows that $h((t^2+\tau^2)^{1/2})\ge 0$ and hence $\psi(t,\tau^2,p)\ge 2p K (t^2+\tau^2)^{1/2}$. This completes the proof.
\end{proof}

Even if we can not compute $\alpha(K,p)$ explicitely, one can take 
$$\alpha(K,p) =  \left( 4(2K+2) + 1\right)^{\frac{1}{p-1}}.$$
And thus $\vartheta(\eps,p) \geq \alpha(K,p)$ if
\begin{equation} \label{eq:estim_eps}
\eps \leq \frac{p-1}{2 \left( \alpha(K,p) + 1 \right)^{2-p}} .
\end{equation}
The right-hand side is a decreasing function w.r.t. $p \in (1,2)$ and w.r.t. $K\geq 0$. Hence when $p$ is close to one and $K$ is large, $\eps$ is be very small.

\subsection*{Acknowledgements.} 
A. Popier thanks Laurent Denis and Sa\"id Hamad\`ene for the fruitful discussions on this topic. The authors are grateful to Bruno Bouchard, Dylan Possama\"i, Xiaolu Tan and Chao Zhou for indicating us our mistake in the first version. The authors thank sincerely the anonymous referees for helpful comments and suggestions.

\bibliography{biblio_revised_version}

\def\cprime{$'$}
\begin{thebibliography}{10}

\bibitem{bouc:poss:zhou:15}
B.~{Bouchard}, D.~{Possama{\"\i}}, X.~{Tan}, and C.~{Zhou}.
\newblock {A unified approach to a priori estimates for supersolutions of BSDEs
  in general filtrations}.
\newblock {\em ArXiv e-prints}, July 2015.

\bibitem{bria:dely:hu:03}
Ph. Briand, B.~Delyon, Y.~Hu, E.~Pardoux, and L.~Stoica.
\newblock {$L^p$} solutions of backward stochastic differential equations.
\newblock {\em Stochastic Process. Appl.}, 108(1):109--129, 2003.

\bibitem{delo:13}
{\L}.~Delong.
\newblock {\em Backward stochastic differential equations with jumps and their
  actuarial and financial applications}.
\newblock European Actuarial Academy (EAA) Series. Springer, London, 2013.
\newblock BSDEs with jumps.

\bibitem{jaco:79}
J.~Jacod.
\newblock {\em Calcul stochastique et probl\`emes de martingales}, volume 714
  of {\em Lecture Notes in Mathematics}.
\newblock Springer, Berlin, 1979.

\bibitem{klim:15}
T.~Klimsiak.
\newblock Reflected {BSDE}s on filtered probability spaces.
\newblock {\em Stochastic Process. Appl.}, 125(11):4204--4241, 2015.

\bibitem{klim:rozk:13}
T.~Klimsiak and A.~Rozkosz.
\newblock Dirichlet forms and semilinear elliptic equations with measure data.
\newblock {\em J. Funct. Anal.}, 265(6):890--925, 2013.

\bibitem{krei:petu:seme:82}
S.~G. Kre{\u\i}n, Yu.~{\=I}. Petun{\={\i}}n, and E.~M. Sem{\"e}nov.
\newblock {\em Interpolation of linear operators}, volume~54 of {\em
  Translations of Mathematical Monographs}.
\newblock American Mathematical Society, Providence, R.I., 1982.
\newblock Translated from the Russian by J. Sz{\H{u}}cs.

\bibitem{krus:popi:14}
T.~Kruse and A.~Popier.
\newblock B{SDE}s with monotone generator driven by {B}rownian and {P}oisson
  noises in a general filtration.
\newblock {\em Stochastics}, 88(4):491--539, 2016.

\bibitem{krus:popi:15}
T.~Kruse and A.~Popier.
\newblock Minimal supersolutions for {BSDE}s with singular terminal condition
  and application to optimal position targeting.
\newblock {\em Stochastic Process. Appl.}, 126(9):2554--2592, 2016.

\bibitem{leng:lepi:prat:80}
E.~Lenglart, D.~L{\'e}pingle, and M.~Pratelli.
\newblock Pr\'esentation unifi\'ee de certaines in\'egalit\'es de la th\'eorie
  des martingales.
\newblock In {\em Seminar on {P}robability, {XIV} ({P}aris, 1978/1979)
  ({F}rench)}, volume 784 of {\em Lecture Notes in Math.}, pages 26--52.
  Springer, Berlin, 1980.
\newblock With an appendix by Lenglart.

\bibitem{mari:rock:14}
C.~Marinelli and M.~R{\"o}ckner.
\newblock On maximal inequalities for purely discontinuous martingales in
  infinite dimensions.
\newblock In {\em S\'eminaire de {P}robabilit\'es {XLVI}}, volume 2123 of {\em
  Lecture Notes in Math.}, pages 293--315. Springer, Cham, 2014.

\bibitem{popi:16}
A.~Popier.
\newblock Limit behaviour of bsde with jumps and with singular terminal
  condition.
\newblock {\em ESAIM: PS}, 20:480--509, 2016.

\end{thebibliography}

\end{document}